\newcommand{\scrA}{\mathcal{A}}
\newcommand{\scrB}{\mathcal{B}}
\newcommand{\scrC}{\mathcal{C}}
\newcommand{\scrP}{\mathcal{P}}
\newcommand{\scrS}{\mathcal{S}}
\newcommand{\bA}{\mathbf{A}}
\newcommand{\bE}{\mathbf{E}}
\newcommand{\bF}{\mathbf{F}}
\newcommand{\bP}{\mathbf{P}}
\newcommand{\bR}{\mathbf{R}}
\newcommand{\bbF}{\mathbb{F}}
\newcommand{\bbR}{\mathbb{R}}
\newcommand{\bbZ}{\mathbb{Z}}
\newcommand{\Hom}{\mathrm{Hom}}
\newcommand{\Ext}{\mathrm{Ext}}
\newtheorem{dummy}{Dummy}[section]
\newtheorem{proposition}[dummy]{Proposition}
\newtheorem{lemma}[dummy]{Lemma}
\newtheorem{theorem}[dummy]{Theorem}
\theoremstyle{definition}
\newtheorem{definition}[dummy]{Definition}
\newtheorem{remark}[dummy]{Remark}
\newtheorem{example}[dummy]{Example}
\newcommand{\twocolim}{2\!\varinjlim}
\newcommand{\shHom}{\underline{\mathrm{Hom}}}
\newcommand{\tilt}{\mathrm{tilt}}
\title{Stacks similar to the stack of perverse sheaves}
\author{David Treumann}
\date{January 2008}
\begin{document}
\maketitle

\begin{abstract}
We introduce, on a topological space $X$, a class of stacks of abelian categories we call ``stacks of type P.''  This class of stacks includes the stack of perverse sheaves (of any perversity, constructible with respect to a fixed stratification), and is singled out by fairly innocuous axioms.  We show that some basic structure theory for perverse sheaves holds for a general stack of type P: such a stack is locally equivalent to a MacPherson-Vilonen construction, and under certain connectedness conditions its category of global objects is equivalent to the category of modules over a finite-dimensional algebra.  To prove these results we develop a rudimentary tilting formalism for stacks of type P -- another sense in which these stacks are ``similar to stacks of perverse sheaves.''
\end{abstract}

\section{Introduction}
The abelian category $\bP = \bP(X,\scrS)$ of perverse sheaves on a space $X$, constructible with respect to a stratification $\scrS$, was introduced in \cite{bbd} and has a raft of applications, especially in representation theory.  A standard complaint about perverse sheaves is that they are not very concrete: $\bP$ is defined as a subcategory of the derived category of sheaves on $X$, but many of its basic properties are not obvious from this definition.  Moreover when one wishes to work with a particular perverse sheaf it is not very efficient, if it is even possible, to write it down as a chain complex.  

A lot of work has been done to address this complaint.  One approach has been to develop methods to describe $\bP(X,\scrS)$ more explicitly, both in general (e.g. \cite{macphersonvilonen}, \cite{gmvquiver}) and for especially interesting pairs $(X,\scrS)$ (e.g. \cite{bradengrinberg}, \cite{braden}).  Another approach (e.g. \cite{mirollovilonen}, \cite{cps}) has been to try to understand more abstractly ``what kind'' of category $\bP(X,\scrS)$ is -- for instance (especially in \cite{cps}) by formulating the useful properties of perverse sheaves as axioms rather than as consequences of an opaque definition.
This paper is a contribution to this circle of ideas.  Our starting point is the fact (proved already in \cite{bbd}) that $\bP(X,\scrS)$ is the category of global objects of a stack of categories $\scrP$ on $X$ -- this means roughly that a perverse sheaf on $X$ may be given by a perverse sheaf on each chart of an open cover, together with descent data.  We are interested in ``what kind'' of stack $\scrP$ is, rather than ``what kind'' of category $\bP$ is.  

In this paper, we introduce a class of stacks we call ``stacks of type P.''  The stacks of type P include the stack of perverse sheaves (of any perversity), and they are singled out by a list of fairly innocuous axioms.  The most basic of these is that a stack of type P should be constructible with respect to a stratification of $X$.  (The notion of a constructible stack, directly analogous to the notion of a constructible sheaf, is introduced in \cite{epcs}).  We investigate the extent to which these stacks behave ``similarly to stacks of perverse sheaves.''

Let us briefly recall the ``elementary construction'' of $\bP$ given in \cite{macphersonvilonen}.  (We review it in more detail in section \ref{section4}.)
Given an abelian category $\bA$, functors $F$ and $G$ from $\bA$ to vector spaces, and a natural transformation $T:F \to G$, MacPherson and Vilonen constructed a new category $C(F,G;T)$ whose objects and morphisms are given explicitly in terms of objects of $\bA$, vector spaces, and linear maps.  On a stratified space of the form $CL \times \bbR^k$, where $L$ is a compact stratified space and $CL$ is the open cone on $L$, they showed that $\bP(CL \times \bbR^k)$ is equivalent to a category of the form $C(F,G;T)$, where $F$ and $G$ are functors on $\bP(L)$.

On a general stratified space $X$ this result may be interpreted as a statement about stalks of the stack $\scrP$: every point $x \in X$ has a neighborhood of the form $CL \times \bbR^k$, and the restriction functor $\bP(CL \times \bbR^k) \to \scrP_x$ is an equivalence (this is equivalent to the constructibility of $\scrP$), so the stalks of $\scrP$ are equivalent to MacPherson-Vilonen constructions.  Our main result is that this holds for an arbitrary stack $\scrC$ of type P:  the stalks of $\scrC$ are equivalent to MacPherson-Vilonen constructions.  To establish this we develop a rudimentary ``tilting formalism'' (as in \cite{bbm}) for stacks of type P, another sense in which these stacks are similar to stacks of perverse sheaves.

We use this result and some basic properties of constructible stacks to obtain a finiteness theorem, following an argument in \cite{mirollovilonen}: if $\scrC$ is a stack of type P on a stratified space $X$ all of whose strata are 2-connected, then the category $\scrC(X)$ is equivalent to the category of modules over a finite-dimensional algebra.

\subsection{Statement of results}

Fix an algebraically closed field $\bbF$.  Let us call a stack $\scrC$ of categories on a space $X$ an $\bbF$-linear \emph{abelian stack} if each category $\scrC(U)$ is an $\bbF$-linear abelian category, and if each of the restriction functors $\scrC(U) \to \scrC(V)$ is an exact $\bbF$-linear functor.  

For our conventions on stratification theory see section \ref{conventions}.

\begin{definition}
\label{defPstacks}
A stack $\scrC$ on a topologically stratified space $(X,\scrS)$ is a \emph{$\bbF$-linear stack of type P} if it satisfies the following conditions:
\begin{enumerate}
\item $\scrC$ is constructible with respect to $\scrS$, in the sense of \cite{epcs}.  Recall that this means that the restriction of $\scrC$ to each stratum of $\scrS$ is locally constant.
\item $\scrC$ is an $\bbF$-linear abelian stack.  Each stalk category $\scrC_x$ has the property that all its objects are of finite-length, and that all the vector spaces $\Hom(c,d)$ and $\Ext^1(c,d)$ are finite-dimensional.
\item In each stalk category $\scrC_x$, there is a unique simple object $s_x$ supported on the stratum containing $x$.  Furthermore, the Yoneda ext groups $\Ext^1(s_x,s_x)$ and $\Ext^2(s_x,s_x)$ vanish.
\item If $Z \subset X$ is a closed union of strata, and $U \subset X$ is any open set, then the restriction functor $\scrC(U) \to \scrC(U - U \cap Z)$ exhibits the latter category as a Serre quotient of $\scrC(U)$ by the subcategory $\scrC_Z(U)$ of objects supported on $Z$ (definition \ref{twonewstacks}).
\end{enumerate}
\end{definition}

We prove the following proposition in section \ref{sec2}:

\begin{proposition}
\label{scrPistypeP}
Let $(X,\scrS)$ be a topologically stratified space, and let $p:\scrS \to \bbZ$ be any function from connected strata of $\scrS$ to integers.  The stack $\scrP = \scrP_{\scrS,p}$ of $\scrS$-constructible $p$-perverse sheaves on $X$ is a stack of type P.
\end{proposition}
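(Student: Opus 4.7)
The plan is to verify the four axioms of Definition \ref{defPstacks} in turn for $\scrP = \scrP_{\scrS,p}$. My main tool will be that, by topologically stratified cone--neighborhood theory, each stalk $\scrP_x$ is equivalent to the category $\bP(CL \times \bbR^k)$ of perverse sheaves on a distinguished cone neighborhood of $x$, where $L$ is the compact link of $x$ in $X$ and $k = \dim S$ for the stratum $S$ through $x$; this reduces each axiom to the structure theory of \cite{bbd} applied locally.

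For axiom (1), $\scrS$-constructible perverse sheaves on a single stratum $T$ are just local systems on $T$ shifted by a degree depending on $p(T)$, so the restriction $\scrP|_T$ is locally constant. Axiom (2) reduces to standard facts: the $\bbF$-linear abelian structures on the $\bP(U)$ and the t-exactness of open restrictions $\bP(U) \to \bP(V)$ are part of the recollement formalism of \cite{bbd}, while finite length of objects in $\scrP_x$ and finite-dimensionality of $\Hom$ and $\Ext^1$ follow from finiteness of the local stratification together with finiteness of the cohomology of the compact link $L$. Axiom (4) is precisely the recollement theorem: $\bP_Z(U)$ is a Serre subcategory of $\bP(U)$ and open restriction induces an equivalence $\bP(U)/\bP_Z(U) \simeq \bP(U \setminus Z)$.

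The substantive part is axiom (3). Locally near $x$ the stratum $S$ is closed in the cone neighborhood, identified with $\{*\} \times \bbR^k \subset CL \times \bbR^k$; write $i$ for this closed inclusion. Simple objects of $\scrP_x$ supported on $S$ correspond to simple local systems on $\bbR^k$, and since $\bbR^k$ is simply connected there is a unique such, the trivial rank-one system, giving $s_x = i_* \bbF[m]$ for a perversity-dependent shift $m$. To obtain the Ext vanishing I pass to the ambient derived category and use that $i^* i_* = \mathrm{id}$ for the closed embedding $i$, yielding
\[
\Hom_{D^b(CL \times \bbR^k)}(s_x, s_x[n]) \;=\; \Hom_{D^b(\bbR^k)}(\bbF, \bbF[n]) \;=\; H^n(\bbR^k;\bbF),
\]
which vanishes for $n > 0$ since $\bbR^k$ is contractible. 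The natural map $\Ext^i_{\scrP_x}(s_x,s_x) \to \Hom_{D^b}(s_x, s_x[i])$ is an isomorphism for $i \le 1$ and injective for $i = 2$ in the heart of any bounded t-structure, so both $\Ext^1_{\scrP_x}(s_x,s_x)$ and $\Ext^2_{\scrP_x}(s_x,s_x)$ vanish as required.

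The main obstacle is not really mathematical -- each axiom is close to a standard fact about perverse sheaves -- but organizational: one must marshal the correct input from \cite{bbd} for each axiom, and in particular verify the slightly subtle injectivity $\Ext^2_{\scrP_x}(s_x,s_x) \hookrightarrow \Hom_{D^b}(s_x,s_x[2])$ used in the Ext vanishing for axiom (3).
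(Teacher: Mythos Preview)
Your proposal is correct and follows essentially the same route as the paper: axioms (1) and (4) are dispatched by citation to \cite{epcs} and \cite{bbd}, axiom (2) by finiteness of the local cone stratification and compactness of the link, and axiom (3) by computing $\Hom_{D^b}(s_x,s_x[n]) \cong H^n(\bbR^k;\bbF)=0$. The only cosmetic difference is in the $\Ext^2$ vanishing: you invoke the general injectivity $\Ext^2_{\mathrm{heart}}\hookrightarrow \Hom_D(-,-[2])$, whereas the paper unwinds exactly that lemma by hand, lifting the class $[B]\in\Ext^1(i_*V,C)$ to $\Ext^1(i_*V,A)$ along the four-term sequence coming from the triangle on $i_*V\to A\to C$.
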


Let us say that $\scrC$ \emph{has recollement operations} for the stratification if one can make sense of the usual sheaf operations $i^*,i^!,i_*,i_!$ in the following sense: whenever $Z$ is a closed union of strata and $U$ is an open set, the inclusion $\scrC_Z(U) \hookrightarrow \scrC(U)$ and the restriction
$\scrC(U) \to \scrC(U - U \cap Z)$ admit adjoints on both sides.  Here $\scrC_Z(U)$ denotes the full subcategory of $\scrC(U)$ whose objects are supported on $Z$.

\begin{theorem}
\label{maintheorem}
Let $(X,\scrS)$ be a topologically stratified space, and let $\scrC$ be an $\scrS$-constructible $\bbF$-linear stack of type P on $X$.
\begin{enumerate}
\item $\scrC$ has recollement operations for the stratification.
\item Each stalk $\scrC_x$ is equivalent to a MacPherson-Vilonen construction.  More specifically, if $U$ is a regular neighborhood of $x$ and $Z \subset U$ is the closed stratum containing $x$, then $\scrC_x$ is equivalent to a MacPherson-Vilonen construction $C(F,G;T)$, where $F$ and $G$ are functors $\scrC(U - Z) \to \bbF\text{-Vect}$.
\item Suppose that $\scrS$ is a Thom-Mather stratification.  If all the strata are 2-connected, then the category of global objects is equivalent to the category of finite-dimensional modules over a finite-dimensional $\bbF$-algebra.
\end{enumerate}
\end{theorem}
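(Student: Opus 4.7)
The plan is to prove the three parts of the theorem in order, using (1) to set up the recollement tools, (2) as the key structural result, and (3) as an application. Throughout, axiom (4) lets me reduce questions about $\scrC(U)$ to questions about $\scrC(U - U \cap Z)$ together with data supported on $Z$, and axiom (3) identifies the category of $Z$-supported data, stalkwise near $x$, with $\bbF$-vector spaces.

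For part (1), the adjoints to the inclusion $\scrC_Z(U) \hookrightarrow \scrC(U)$ can be defined objectwise: since every object has finite length (axiom (2)), $i^!(c)$ is the largest subobject supported on $Z$ and $i^*(c)$ is the largest such quotient, both well-defined because $\scrC_Z(U)$ is Serre in $\scrC(U)$. The adjoints $j_!, j_*$ to the Serre quotient $j^*$ are more delicate. I would build them by induction on the depth of the stratification, using the tilting formalism announced in the abstract. At each inductive step the extension problem for $A \in \scrC(U - U \cap Z)$ across the deepest closed stratum is controlled by $\Ext^i(s_x, -)$ for $i = 1, 2$, and the vanishing $\Ext^i(s_x, s_x) = 0$ provided by axiom (3) guarantees that standard and costandard extensions exist and are unique up to isomorphism.

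Turning to part (2), fix a regular neighborhood $U$ of $x$ with closed stratum $Z \ni x$ and set $\bA = \scrC(U - Z)$. Recollement from part (1) supplies functors $i^* j_!$ and $i^! j_*$ from $\bA$ to $\scrC_Z(U)$, and by axiom (3) the stalk at $x$ of $\scrC_Z(U)$ has a unique simple $s_x$ with $\Ext^1(s_x, s_x) = 0$, so (since $\bbF$ is algebraically closed and the relevant $\Hom$-groups are finite-dimensional) the functor $\Hom(s_x, -)$ is an equivalence to $\bbF\text{-Vect}$. Define
\begin{align*}
F(A) &= \Hom(s_x, i^* j_! A), \\
G(A) &= \Hom(s_x, i^! j_* A),
\end{align*}
and let $T : F \to G$ come from the canonical natural transformation $j_! \to j_*$. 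The candidate equivalence sends $c \in \scrC(U)$ to $(j^* c, V, W; a, b) \in C(F, G; T)$, where $V = \Hom(s_x, i^* c)$, $W = \Hom(s_x, i^! c)$, and $a, b$ are obtained from the units and counits of the recollement adjunctions. Essential surjectivity is shown by explicitly building a preimage out of $j_! A$, $j_* A$, $i_* s_x$, and the vector spaces $V, W$ via pushouts and pullbacks; fully-faithfulness is verified by a $\Hom$-group calculation using the recollement triangles. Passing to $\scrC_x$ then gives the stated equivalence.

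Finally, for part (3), the Thom--Mather hypothesis supplies finitely many strata and compact links; the 2-connectedness of each stratum eliminates both monodromy of local systems and higher extension-twisting between two copies of $s_x$, so axiom (3) gives $\scrC(X)$ exactly one simple $s_\alpha$ per stratum. I would construct a projective cover $P_\alpha$ of each $s_\alpha$ by induction on strata, lifting across each new stratum using the MacPherson--Vilonen description from part (2); the progenerator $P = \bigoplus_\alpha P_\alpha$ then identifies $\scrC(X)$ with $\mathrm{End}(P)^{\mathrm{op}}\text{-mod}$, and finite-dimensionality of $\mathrm{End}(P)$ reduces, inductively and again via part (2), to the finite-dimensionality of $\Hom$ and $\Ext^1$ between simples in each stalk, which is axiom (2). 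The principal obstacle is part (2): the construction of $(F, G, T)$ and the verification of the equivalence with $C(F, G; T)$ must proceed hand-in-hand with the tilting arguments underlying the existence of $j_!$ and $j_*$ in part (1). Once this joint package is in place, part (3) is essentially a Mirollo--Vilonen style bookkeeping exercise.
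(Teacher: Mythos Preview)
There is a genuine gap in part (2). An object of $C(F,G;T)$ is a factorization $F(A'')\to A'\to G(A'')$ through a \emph{single} $A'\in\bbF\text{-Vect}$; your candidate $c\mapsto(j^*c,V,W;a,b)$, with two vector spaces $V=\Hom(s_x,i^*c)$ and $W=\Hom(s_x,i^!c)$, does not land in $C(F,G;T)$ at all. The missing ingredient is an \emph{exact} retraction $r:\scrC_x\to\bbF\text{-Vect}$ of $i_*$: one then sets $F=r\circ j_!$, $G=r\circ j_*$, and $R(c)=\big(j^*c,\,r(c),\,r(j_!j^*c\to c),\,r(c\to j_*j^*c)\big)$. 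Neither $i^*$ (only right exact) nor $i^!$ (only left exact) can serve as $r$, so your $F,G$ are not the ones that make the comparison functor work. The paper supplies $r$ by a separate lemma: it is the functor pro-represented by the projective cover of $s_x$, namely $r(a)=\varinjlim_x\Hom(x,a)$ over indecomposable covers $x\twoheadrightarrow s_x$, and exactness is deduced from $\Ext^1(s_x,s_x)=0$. With $r$ in hand, full faithfulness of $R$ is a five-lemma argument, and essential surjectivity is where tilting and the axiom $\Ext^2(s_x,s_x)=0$ finally enter: that vanishing lets one lift the four-term sequence $0\to a\to j_!c\to j_*c\to b\to 0$ to a tilting object $c^{\tilt}$, and a double-complex argument identifies both the lifts of $c$ to $\scrC_x$ and the lifts to $C(F,G;T)$ with factorizations of $i^!c^{\tilt}\to i^*c^{\tilt}$.

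A secondary point: you propose to build $j_!,j_*$ in part (1) ``using the tilting formalism,'' but the logical order in the paper is the reverse. The recollement adjoints are obtained first by a direct representability argument on stalks: for simple $c\in\scrC_{X-Z,x}$ one lifts to the unique simple $\tilde c\in\scrC_x$ and shows that the universal extension of $\tilde c$ by $\dim\Ext^1(s_x,\tilde c)$ copies of $s_x$ represents $\Hom(j^*-,c)$, using only $\Ext^1(s_x,s_x)=0$; no $\Ext^2$ and no tilting are needed. Tilting objects are then constructed \emph{from} the recollement operations, and used solely for essential surjectivity in part (2). Your sketch of part (3) is in the right spirit; the paper likewise inducts on strata and appeals to Mirollo--Vilonen for projectives, though it does so by first promoting the stalkwise MacPherson--Vilonen description to a global one for $\scrC(X)$ over $\scrC(X-Z)$ via a gluing square, rather than building the $P_\alpha$ directly.
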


\subsection{Further problems}  Let us discuss some open questions related to the contents of this paper.

\emph{Problem 1.}  Let $X$ be a complex algebraic variety and let $\scrS$ be a Whitney stratification of $X$ by complex algebraic subsets.  Can one characterize the stack $\scrP$ of $\scrS$-constructible, middle-perversity perverse sheaves on $X$, by a list of simple axioms?  This problem was formulated to address the ``standard complaint'' discussed above, and motivated the definition of ``stack of type P,'' but at some point I became skeptical that it has a nice answer.

Note that the most interesting and beautiful properties of perverse sheaves are consequences of complex geometry or Hodge theory; a closely related question is whether these properties entail anything interesting about the stack $\scrP$.  Our ``type P'' abstraction ignores these complex algebraic features of perverse sheaves, to the point that we found it better to develop the theory in a purely topological setting.

\emph{Problem 2.}  Can the usual sheaf-theory package (six operations, duality) be extended or modified to the setting of stacks of abelian categories?  (More plausibly, to a more high-tech setting such as stacks of dg categories or stable $\infty$-categories.)  This paper and \cite{epcs} constitute, in part, an effort to start developing a structure theory for such stacks.  See for instance remark \ref{rem-twonewstacks}.

\emph{Problem 3.}  In general, is the category of global objects $\scrC(X)$ of a stack of type P equivalent to the category of finite-dimensional modules over a finitely-presented algebra?  This is a theorem of \cite{gmvquiver} when $\scrC$ is the stack of perverse sheaves, but it is proved by constructing a very explicit presentation for a ring, by maneuvers (such as the Fourier transform for sheaves) not available in our setting.  In some sense the question is whether this finiteness result holds for a ``less interesting'' reason.  The major obstacle to using the techniques of this paper to resolve this is the fact that one cannot tell from the category of finite-dimensional $R$-modules whether $R$ is finitely presented.

\subsection{Notation and conventions}
\label{conventions}

We work throughout over an algebraically closed field $\bbF$.  If $a$ and $b$ are objects of an abelian category (always an $\bbF$-linear abelian category), then we will use $\Ext^n(a,b)$, for $n>0$, to denote the Yoneda ext group of equivalence classes of $(n+2)$-term exact sequences starting with $a$ and ending with $b$.  

A \emph{prestack} on a space $X$ is a weak functorial assignment from the open subsets of $X$ to categories -- that is, a 2-functor from the open subsets of $X$ ordered by reverse inclusion to the 2-category of categories.  A \emph{stack} on $X$ is a prestack satisfying a local-to-global condition.  We refer to \cite{epcs} for definitions.

We say that a space is \emph{2-connected} if it is connected and has vanishing first and second homotopy groups.

We need to make some remarks about stratification theory.  In this paper, we work in the very general setting of \emph{topologically stratified spaces}, introduced in \cite{ih2}.  The exception is in section \ref{secglobal}, where we introduce another technical condition -- always satisfied in practice, for instance on Thom-Mather stratified spaces -- to our stratified spaces (that strata, not just points, have regular neighborhoods).  A topological stratification $\scrS$ of a space $X$ is a decomposition into locally closed strata, with the property that each point has a neighborhood -- called \emph{regular} or \emph{conical} -- which is homeomorphic to $\bbR^k \times CL$ in a stratum-preserving way.  Here $L$ is a compact topologically stratified space of lower dimension, and $CL$ denotes the open cone on $L$ along with its induced decomposition.  We refer to \cite{ih2} and \cite{epcs} for more detailed definitions.

\section{Stacks of perverse sheaves}
\label{sec2}

In this section fix a topologically stratified space $(X,\scrS)$ and a function $p:\scrS \to \bbZ$ from connected strata of $\scrS$ to integers.  Let $\scrP$ denote the stack of $\scrS$-constructible $p$-perverse sheaves on $X$.  The purpose of this section is to prove proposition \ref{scrPistypeP}, i.e. that $\scrP$ is a stack of type P.  

We have to show $\scrP$ satisfies conditions (1) through (4) of definition \ref{defPstacks}.  That $\scrP$ is $\scrS$-constructible is proved in \cite{epcs}, 
 so (1) holds, and that (4) holds is proved in \cite{bbd}.  It remains to prove (2) and (3).

\begin{proposition}
\label{2holds}
The stack $\scrP$ satisfies condition (2) of \ref{defPstacks}.  That is, $\scrP$ is an $\bbF$-linear abelian stack, and for each point $x \in X$, the stalk category $\scrP_x$ has only finite-length objects and finite-dimensional $\Hom$- and $\Ext^1$-spaces.
\end{proposition}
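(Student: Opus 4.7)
The plan is to dispose of the abelian-stack axioms separately from the stalk finiteness, and to prove the latter by induction on $\dim X$ using regular neighborhoods together with the recollement asserted in condition (4), which for perverse sheaves is the main construction of \cite{bbd}.

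For the stack structure: each $\scrP(U)$ is by definition the heart of the perverse $t$-structure on $D^b_\scrS(U,\bbF)$, hence an $\bbF$-linear abelian category. For an open inclusion $j:V\hookrightarrow U$, the restriction $j^*$ coincides with $j^!$ and is perverse $t$-exact, so it restricts to an exact $\bbF$-linear functor on hearts. This part of condition (2) needs no further work.

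For the stalk finiteness, fix $x\in X$ and a regular neighborhood $U\cong \bbR^k\times CL$ with $x$ the cone vertex $v$. By constructibility (condition (1), proved in \cite{epcs}) the restriction $\scrP(U)\to\scrP_x$ is an equivalence, and homotopy invariance along the $\bbR^k$ factor reduces the question to $\scrP(CL)$. I would induct on $\dim X$. The base case $\dim X=0$ is trivial: $\scrP$ of a point is, up to perverse shift, finite-dimensional $\bbF$-vector spaces, where all objects have finite length, Homs are finite-dimensional, and $\Ext^1$ vanishes. For the inductive step, apply condition (4) to the closed stratum $\{v\}\subset CL$ to obtain the Serre short exact sequence
\[
0\to \scrP_{\{v\}}(CL)\to \scrP(CL)\to \scrP(CL-\{v\})\to 0,
\]
in which $\scrP_{\{v\}}(CL)$ is equivalent to finite-dimensional $\bbF$-vector spaces placed in a fixed perverse degree, and $\scrP(CL-\{v\})\simeq \scrP(L)$ by homotopy invariance along the radial direction. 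Since $\dim L<\dim X$, the inductive hypothesis applies to $\scrP(L)$. Finite length in $\scrP(CL)$ then follows at once: any $M$ is an extension of an object supported at $v$ by an object coming from $\scrP(L)$, both of finite length.

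The main obstacle is finite-dimensionality of $\Ext^1_{\scrP(CL)}(M,N)$: the quotient functor in a Serre quotient does not in general preserve $\Ext^1$, so this cannot be read off the abelian recollement alone. The plan is to lift to the derived category, writing $\Ext^1_{\scrP(CL)}(M,N)=\Hom_{D^b_\scrS(CL)}(M,N[1])$ and applying $\Hom(-,N[1])$ and $\Hom(M,-)$ to the canonical triangles $j_!j^*M\to M\to i_*i^*M$ and $i_*i^!N\to N\to j_*j^*N$, where $i:\{v\}\hookrightarrow CL$ and $j:CL-\{v\}\hookrightarrow CL$. The resulting long exact sequences express $\Ext^1_{\scrP(CL)}(M,N)$ in terms of (a) Hom and Ext groups between the perverse cohomology sheaves $\,^p\!H^i(j^*M)$ and $\,^p\!H^i(j^*N)$ in $\scrP(L)$, which are finite-dimensional by the inductive hypothesis (once one checks these cohomology sheaves are themselves of finite length, which again follows from induction), and (b) Hom groups against complexes supported at the cone point, which are finite-dimensional by elementary linear algebra since $\,^p\!H^i(i^*M)$ and $\,^p\!H^i(i^!M)$ are finite-dimensional vector spaces. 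The analogous but simpler argument controls $\Hom_{\scrP(CL)}(M,N)$. The bookkeeping with perverse shifts on the cone is the main technical nuisance, but no essentially new finiteness input is needed beyond what the induction already provides.
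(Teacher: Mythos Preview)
Your inductive framework has a gap. You induct on $\dim X$, with the hypothesis that for lower-dimensional spaces the \emph{stalk} categories $\scrP_y$ have the finiteness properties. In the inductive step you reduce $\scrP_x \cong \scrP(CL)$ to $\scrP(L)$ via recollement at the cone point, and then assert that ``the inductive hypothesis applies to $\scrP(L)$.'' But $\scrP(L)$ is the \emph{global} perverse category over the link, not a stalk: your hypothesis only tells you about $\scrP_\ell$ for points $\ell\in L$, and passing from stalk finiteness to global finiteness on the compact space $L$ is a nontrivial step you have not supplied. For finite length this can be patched with a compactness argument, but for $\Hom$ and $\Ext^1$ the missing step---finite-dimensionality of global sections of a constructible $\shHom$ sheaf on a compact stratified space---is essentially the whole content of what you are trying to prove, so the induction does not close as written.

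The paper avoids this in two ways. For finite length it inducts on the number of strata \emph{within} the conical neighborhood $U\cong\bbR^k\times CL$: removing an open stratum leaves a space of the same conical form $\bbR^k\times C(L')$ with $L'\subset L$ closed, so the induction never leaves the cone and the stalk-versus-global issue does not arise. For $\Hom$ and $\Ext^1$ the paper uses no induction at all: it identifies $\Ext^n_{\scrP(U)}(P,Q)$ with $\Hom_{D^b(U)}(P,Q[n])$ for $n=0,1$, rewrites this as $H^0(U;\bR\shHom(P,Q))$, and reduces by d\'evissage to the finite-dimensionality of the cohomology of an $\scrS$-constructible sheaf on $\bbR^k\times CL$, which holds because $L$ is compact. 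This is both shorter and directly supplies the global finiteness input that your recollement argument would need as a black box.
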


\begin{proof}
Since $\scrP$ is constructible, the stalk category $\scrP_x$ is equivalent to the category $\scrP(U)$ via the restriction map $\scrP(U) \to \scrP_x$, where $U$ is a regular neighborhood of $x$.  
The space $U$ is homeomorphic to $\bbR^k \times CL$, where $CL$ is the open cone on a compact  stratified space $L$.  Since $L$ is compact, the induced stratification on $U$ has finitely many strata.  We will prove that every object of $\scrP(U)$ has finite length by induction on the number of strata.  

Let $U' \subset U$ be an open stratum.  Any perverse sheaf $P \in \scrP(U)$ fits into an exact sequence
$${^p \! j}_! j^! P \to P \to P'' \to 0$$
where $P''$ is supported on the complement $Z$ of $U'$.  The space $Z$ is homeomorphic to $\bbR^j \times CM$ for some closed $M \subset L$, so we may assume $P''$ is of finite length by induction.  The sheaf $j^! P$ is a local system on $U'$ with finite-dimensional fibers, and ${^p \! j}_! j^!P$ fits into an exact sequence
$$0 \to Q \to {^p \! j}_! j^! P \to j_{!*} j^! P \to 0$$
where $Q$ is supported on $Z$.  Thus, ${^p \! j}_! j^! P$ is of finite length, and so $P$ is of finite length also.

For $n = 0,1$, the natural map $\Ext^n_{\scrP(U)}(P,Q) \to \Hom_{D^b(U)}(P,Q[n])$ is an isomorphism, so to show that $\scrP(U)$ has finite-dimensional $\Hom$ and $\Ext^1$ spaces it suffices to show that
$\Hom_{D^b(U)}(A,B)$ is finite-dimensional for all cohomologically $\scrS$-constructible objects of $D^b(U)$. We have $\Hom(A,B) \cong H^0(U;\bR\shHom(A,B))$, so by d\'evissage it suffices to note that the cohomology of an $\scrS$-constructible sheaf on $U \cong \bbR^k \times CL$ is finite-dimensional, as $L$ is compact.  
\end{proof}

\begin{proposition}
\label{3holds}
The stack $\scrP$ satisfies condition (3) of \ref{defPstacks}.  That is, each stalk category $\scrP_x$ has a unique simple object $s_x$ supported on the stratum containing $x$, and the Yoneda ext groups $\Ext^1(s_x,s_x)$ and $\Ext^2(s_x,s_x)$ vanish.
\end{proposition}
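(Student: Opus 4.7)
The plan is to reduce the question to a regular neighborhood $U \cong \bbR^k \times CL$ of $x$ and then exploit the fact that, inside $U$, the stratum of $x$ is a single closed Euclidean subspace, hence contractible. Concretely, by the already-established constructibility (condition (1)), the restriction $\scrP(U) \to \scrP_x$ is an equivalence, so I would replace $\scrP_x$ with $\scrP(U)$ for $U$ a regular neighborhood stratum-preservingly homeomorphic to $\bbR^k \times CL$ with $x$ corresponding to the cone vertex $(0,v)$. Under this identification the stratum containing $x$ becomes $T := \bbR^k \times \{v\}$. Two features of $T$ drive the whole argument: $T$ is closed in $U$ (since $\{v\}$ is closed in $CL$), and $T$ is contractible.

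For the uniqueness of $s_x$, I would use the standard classification of simple perverse sheaves supported on a single closed stratum: any such simple object in $\scrP(U)$ has the form $i_*\scrL[d]$, where $i:T\hookrightarrow U$ is the closed inclusion, $d$ is the perversity shift attached to $T$, and $\scrL$ is an irreducible $\bbF$-local system on $T$. Contractibility of $T$ forces $\scrL \cong \bbF_T$, so $s_x \cong i_*\bbF_T[d]$ is the unique simple object of $\scrP(U)$ supported on the stratum containing $x$.

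For the vanishing of $\Ext^1(s_x,s_x)$ and $\Ext^2(s_x,s_x)$, I would use that $\scrP(U)$ is the heart of the perverse t-structure on the constructible derived category, so the Yoneda map
$$\Ext^n_{\scrP(U)}(s_x,s_x) \longrightarrow \Hom_{D^b(U)}(s_x, s_x[n])$$
is an isomorphism for $n=0,1$ and injective for $n=2$. It therefore suffices to compute the right-hand side. Because $i$ is a closed embedding, $i_* = i_!$ and $i^!i_! \cong \mathrm{id}$, so the $(i_!,i^!)$ adjunction gives
$$\Hom_{D^b(U)}(i_*\bbF_T, i_*\bbF_T[n]) \;\cong\; \Hom_{D^b(T)}(\bbF_T, \bbF_T[n]) \;\cong\; H^n(T;\bbF),$$
which vanishes for every $n>0$ since $T \cong \bbR^k$ is contractible. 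This handles both $n=1$ and $n=2$ in one stroke.

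The main obstacle is really just the bookkeeping at the outset: confirming that $x$ corresponds to the cone vertex in a regular neighborhood (so that the stratum of $x$ in $U$ coincides with the closed cone-vertex stratum $T$), and recalling the classification of simple perverse sheaves supported on a single closed stratum in terms of irreducible local systems. Once those standard facts are in place, contractibility of $T$ makes both uniqueness and Ext vanishing essentially automatic.
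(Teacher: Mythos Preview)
Your proof is correct and follows essentially the same route as the paper: both pass to a regular neighborhood $U\cong\bbR^k\times CL$, identify $s_x$ as $i_*$ of the constant sheaf on the contractible closed stratum, and deduce Ext-vanishing from $\Hom_{D^b(U)}(s_x,s_x[n])\cong H^n(\bbR^k;\bbF)=0$. The only difference is presentational: you invoke the injectivity of the Yoneda map $\Ext^2_{\scrP(U)}(s_x,s_x)\hookrightarrow \Hom_{D^b(U)}(s_x,s_x[2])$ as a known general fact about hearts of $t$-structures, whereas the paper spells out precisely that argument by hand (lifting the class $B\in\Ext^1(i_*V,C)$ to $\Ext^1(i_*V,A)$ using the vanishing of $\Hom_{D(U)}(i_*V,i_*V[2])$).
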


\begin{proof}
As in the proof of proposition \ref{2holds}, let $U$ be a regular neighborhood of $x$, so that $\scrP(U) \to \scrP_x$ is an equivalence.  Let $Z \subset U$ be the stratum containing $x$, and let $i:Z \hookrightarrow U$ be the inclusion map.  The unique simple object supported on $Z$ is $i_* V$, where $V$ is a 1-dimensional $\bbF$-vector space.  We have $\Ext^1(i_*V,i_*V) \cong \Hom_{D^b(U)}(i_* V,i_* V[1]) \cong H^1(Z;\bbF) = 0$.  Let
$$0 \to i_*V \to A \to B \to i_* V \to 0$$
be a Yoneda class in $\Ext^2(i_*V,i_*V)$.  To see that it vanishes we need to find an object $M \in \scrP(U)$ together with a filtration $i_*V \subset A \subset M$ such that $M/i_* V \cong B$ and $M/A \cong i_*V$.  Let $C$ denote the image of $A$ in $B$.  The sequence $0 \to i_*V \to A \to C \to 0$ induces an exact sequence
$$\Hom(i_*V,i_*V[1]) \to \Hom(i_*V,A[1]) \to \Hom(i_*V,C[1]) \to \Hom(i_*V,i_*V[2])$$
of hom spaces in $D(U)$.  The left- and right-hand groups are cohomology groups of $\bbR^k$, which vanish, and the middle groups are $\Ext^1(i_*V,A)$ and $\Ext^1(i_*V,C)$; thus $\Ext^1(i_* V,A) \cong \Ext^1(i_*V,C)$.  We may take $M$ to be a representative in $\Ext^1(i_*V,A)$ corresponding to $B \in \Ext^1(i_*V,C)$.
\end{proof}

\section{Recollement operations}

In this section, we prove part (1) of theorem \ref{maintheorem}.  That is, we show that a stack of type P has recollement operations.  Let us discuss what this means in more detail.  If $\scrC$ is an abelian stack on $X$, and $c$ is an object of $\scrC(U)$, define the \emph{support} of $c$ to be the set of all $z \in U$ such that the restriction of $c$ to the stalk $\scrC_z$ does not vanish.  If $Z$ is a closed subset of $X$ we may define two new stacks $\scrC_Z$ and $\scrC_{X - Z}$ on $X$:

\begin{definition}
\label{twonewstacks}
Let $Z$ be a closed subset of $X$, and let $\scrC$ be an abelian stack on $X$.
\begin{enumerate}
\item Denote by $\scrC_Z$ the prestack that maps an open set $U \subset X$ to the full subcategory $\scrC_Z(U) \subset \scrC(U)$ of objects that vanish under the restriction functor $\scrC(U) \to \scrC(U-U \cap Z)$.

\item Denote by $\scrC_{X-Z}$ the prestack that maps an open set $U \subset X$ to the category
$\scrC\big(U \cap (X-Z)\big)$.  
\end{enumerate}
It is easy to verify that these prestacks are stacks.
\end{definition}

\begin{remark}
\label{rem-twonewstacks}
Note that if we let $i$ denote the inclusion $Z \hookrightarrow X$, and $j$ the inclusion $X - Z \hookrightarrow X$, then in some sense $\scrC_Z$ is $i_! i^! \scrC$, and $\scrC_{X-Z}$ is $j_* j^* \scrC$.
In fact we even have an exact sequence
$$0 \to \scrC_Z \to \scrC \to \scrC_{X-Z}$$
in the sense that $\scrC_Z$ is a full, thick substack of $\scrC$, and it is exactly the ``kernel'' of the functor
$\scrC \to \scrC_{X-Z}$.  If $\scrC$ is of type P and $Z$ is a closed union of strata, then we may add a zero on the right as well, in the sense that the morphism $\scrC \to \scrC_{X-Z}$ exhibits each stalk of $\scrC_{X-Z}$ as a Serre quotient of the stalk of $\scrC$.
\end{remark}

\begin{theorem}
\label{th1holds}
Let $(X,\scrS)$ be a topologically stratified space.  Let $\scrC$ be a stack of type P, and let $Z$ be a closed union of strata.  Then the inclusion functor $\scrC_Z \to \scrC$ and the quotient functor $\scrC \to \scrC_{X - Z}$ have adjoints on both sides.
\end{theorem}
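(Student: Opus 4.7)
My plan is to construct the four adjoints locally on each open $U \subseteq X$, check they are determined by universal properties (hence canonical and compatible with restriction to smaller opens), and thereby assemble them into morphisms of stacks adjoint to $\scrC_Z \hookrightarrow \scrC$ and $\scrC \to \scrC_{X-Z}$. The starting input is axiom (4), which says $\scrC_Z(U)$ is a Serre subcategory of $\scrC(U)$ with Serre quotient $\scrC(U - U \cap Z)$, combined with the finite-length hypothesis of axiom (2).

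For the inclusion $\scrC_Z(U) \hookrightarrow \scrC(U)$: in a finite-length abelian category the subobjects of $c \in \scrC(U)$ lying in the Serre subcategory $\scrC_Z(U)$ are closed under sum and hence have a maximum, which I take to be $i^! c$; dually, $c$ has a maximal quotient in $\scrC_Z(U)$, which I call $i^* c$. The adjunction identities $i^! \dashv (\hookrightarrow) \dashv i^*$ are routine.

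For the Serre quotient $j^*: \scrC(U) \to \scrC(U - U \cap Z)$: essential surjectivity of $j^*$ lets us choose, for each $d$, a lift $c \in \scrC(U)$. I set $j_! d$ to be the minimal subobject of $c$ with quotient in $\scrC_Z(U)$, and $j_* d := c/i^!(c)$; both are finite-length lifts of $d$. In the finite-length setting the general Serre-quotient Hom formula collapses to
\[
\Hom_{\scrC(U - U\cap Z)}(j^*c'_1,\, j^*c'_2) \;=\; \Hom_{\scrC(U)}(j_!\, j^*c'_1,\; j_*\, j^*c'_2),
\]
so the desired adjunctions $\Hom(c', j_* d) \cong \Hom(j^* c', d)$ and $\Hom(j_! d, c') \cong \Hom(d, j^* c')$ reduce to the key Ext-vanishing
\[
\Ext^1_{\scrC(U)}(z, j_* d) \;=\; 0 \;=\; \Ext^1_{\scrC(U)}(j_! d, z) \qquad \text{for all } z \in \scrC_Z(U).
\]
Granting this vanishing, one also checks that $j_! d$ and $j_* d$ are independent of the lift $c$ up to canonical isomorphism.

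Proving the Ext-vanishing is the main obstacle. My plan is to d\'evisser $z$ into simples and induct on the depth of $\scrS$: the base case (one stratum) is trivial because $\scrC_Z(U)$ is then zero or all of $\scrC(U)$. For the inductive step I reduce to the stalk at a point $x \in Z$ in a conical chart $U \simeq \bbR^k \times CL$ whose closed stratum is the cone locus; the inductive hypothesis applied to the lower-dimensional link $L$ handles the contributions from strata of $Z$ other than the one containing $x$, and the remaining obstructions live in Ext groups involving the simple $s_x$, which are killed by the $\Ext^1(s_x, s_x) = \Ext^2(s_x, s_x) = 0$ hypothesis of axiom (3). Finally, since each of $i^*, i^!, j_!, j_*$ is characterized by a universal property, they commute with restriction to smaller opens and assemble into the required stack-level adjoint morphisms.
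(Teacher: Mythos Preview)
Your construction of $j_*$ (and dually $j_!$) is wrong, and the plan cannot be salvaged as written. You set $j_* d := c / i^!(c)$ for an \emph{arbitrary} lift $c$ of $d$, but this object depends on the choice of $c$, and for a generic choice the claimed vanishing $\Ext^1(z, j_* d) = 0$ simply fails. Concretely: work at a stalk $\scrC_x$ with $x$ in the unique stratum of $Z$, let $d$ be simple with simple lift $\tilde c \in \scrC_x$, and take $c = \tilde c$. Then $i^!(\tilde c) = 0$, so your $j_* d$ equals $\tilde c$, but $\Ext^1(s_x, \tilde c)$ is a priori nonzero---axiom~(3) controls only $\Ext^i(s_x, s_x)$, not $\Ext^1(s_x, \tilde c)$ for other simples. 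The ``obstructions'' you hope to kill in the inductive step are precisely these groups, and nothing in the axioms makes them vanish. Your dévissage on $z$ doesn't help: the problem is the target, not the source.

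The correct $j_* d$ is not $c/i^!(c)$ for any single lift; it is the \emph{largest} lift of $d$ having no nonzero subobject in $\scrC_Z$, and its existence is exactly what needs to be proved. The paper does this quite differently: it works stalkwise (note axiom~(2) gives finite length only on stalks $\scrC_x$, not on $\scrC(U)$, which is a second gap in your write-up), reduces by induction on the number of strata of $Z$ to the case where $Z$ has a single stratum, and then shows directly that the functor $\Hom(j^*-, d)$ is representable. Ind-representability is automatic; the content is that the representing ind-object is finite. After reducing to $d$ simple, the representing object is built explicitly as a universal extension: let $n = \dim \Ext^1(s_x, \tilde c)$, push out a spanning set of extension classes to obtain $E$ sitting in $0 \to \tilde c \to E \to s_x^{\oplus n} \to 0$, and use $\Ext^1(s_x, s_x) = 0$ to show every extension of $\tilde c$ by copies of $s_x$ maps uniquely to $E$ over $\tilde c$. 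Thus $\Ext^1(s_x, s_x) = 0$ is used not to kill $\Ext^1(s_x, \tilde c)$ but to control the shape of the ind-system.
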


\begin{proof}
To show that $\scrC_Z \to \scrC$ has a left (resp. right) adjoint, it suffices to show that the induced functor on stalks $i_*:\scrC_{Z,x} \to \scrC_x$ has a left (resp. right) adjoint for each point $x \in X$.

Let $c$ be an object of $\scrC_x$.  Then $c$ has finite length by definition.  In particular $c$ contains a maximal subobject $c' \subset c$ supported on $Z$.  If $c_1$ is any other subobject supported on $Z$ then the compositum $c_1 + c'$ is supported on $Z$ and contains $c'$, so $c' = c_1 + c'$.  Thus $c_1 \subset c'$ and $c'$ is the unique maximal subobject of $c$ supported on $Z$.  For each $c\in \scrC_x$ let $i^! c$ be the unique maximal subobject of $c$ belonging to $\scrC_{Z,x}$.  This defines a left adjoint to $i_*$.
We may construct a right adjoint by a dual process.

Now let us show that $\scrC \to \scrC_{X-Z}$ has adjoints on both sides.  We may factor this functor as
$$\scrC \to \scrC_{X-Y} \to \scrC_{X-Z}$$
where $Y$ is a closed union of strata with one less stratum than $Z$, so by induction we are reduced to the case where $Z$ has only one stratum.  It again suffices to show that for $x \in X$ the functor $\scrC_x \to \scrC_{X-Z,x}$ has adjoints on both sides.  When $x \notin Z$ this is obvious, as $\scrC_x \to \scrC_{X-Z,x}$ is an equivalence.

Suppose $x \in Z$, and let us show that $j^*:\scrC_x \to \scrC_{X-Z,x}$ has a right adjoint.
We want to show that for each $c \in \scrC_{X-Z,x}$, the functor $d \mapsto \Hom(j^*d,c)$ is representable.  As $\Hom(j^*-,c)$ is left-exact and every object of $\scrC_{X-Z,x}$ has finite length, this functor is ind-representable by the system of objects $E_i$ equipped with isomorphisms $f_i:j^*E_i \cong c$.  If $c$ is an extension of $c''$ by $c'$, then the ind-object representing $\Hom(j^*-,c)$
has finite length provided that the ind-objects representing $\Hom(j^*-,c'')$ and $\Hom(j^*-,c')$ do, 
as we see from the exact sequence
$$0 \to \Hom(j^*-,c') \to \Hom(j^*-,c) \to \Hom(j^*-,c'')$$
Thus, we are reduced to showing that $\Hom(j^*-,c)$ is representable when $c$ is simple.

Suppose $c$ is simple.  Since $\scrC_{X-Z,x}$ is a Serre quotient of $\scrC_x$, there is a unique simple object $\tilde{c} \in \scrC_x$ with $j^* \tilde{c} \cong c$.  Let $s \in \scrC_x$ be the simple object supported on $Z$.  Let $n$ be the dimension of $\Ext^1(s,\tilde{c})$.  For $i=1,\ldots,n$ pick extension classes 
$$0 \to \tilde{c} \to Y_i \to s \to 0$$
that span $\Ext^1(s,\tilde{c})$.  Let $E$ be the categorical pushout of the $Y_i$ over $\tilde{c}$, so that
we have an exact sequence
$$0 \to \tilde{c} \to E \to s^{\oplus n} \to 0$$
Note that $\Hom(s,E) = 0$, since in the exact sequence
$$\Hom(s,\tilde{c}) \to \Hom(s,E) \to \Hom(s,s^{\oplus n}) \to \Ext^1(s,\tilde{c})$$
the first group is zero (as $s$ and $\tilde{c}$ are nonisomorphic simple objects) and the last map is an isomorphism by construction.

Now consider the hom set $\Hom(j^*d,c)$.  Since $\scrC_{X-Z,x}$ is a Serre quotient, an element $f:j^*d \to c$ may be represented by a right fraction
$$d \to E' \leftarrow \tilde{c}$$
where $\tilde{c} \to E'$ is an inclusion and $E'/\tilde{c}$ is supported on $Z$.  Since $\Ext^1(s,s) = 0$ we in fact have $E'/\tilde{c} \cong s^{\oplus m}$ for some $m$.  We claim that there is a unique map $E' \to E$ that carries $\tilde{c}$ identically onto $\tilde{c}$; indeed, the set of such maps is the inverse image of the given inclusion under $\Hom(E',E) \to \Hom(\tilde{c},E)$, but this map is injective since $\Hom(s^{\oplus m},E) = \Hom(s,E)^{\oplus m} = 0$.  It follows that $E$ represents $\Hom(j^*-,c)$, as desired.  

A dual proof shows that $j^*$ has a left adjoint as well.
\end{proof}

\section{Recollement of abelian categories}
\label{section4}

In this section we prove the second part of theorem \ref{maintheorem}: that a stack of type P looks locally like a MacPherson-Vilonen construction.  A major ingredient is the recollement formalism introduced in \cite{bbd}.  (We follow \cite{cps} in using ``recollement'' as an English word.)

Let $D',D,D''$ be triangulated categories.  In \cite{bbd}, it is shown how to construct a $t$-structure on $D$ from $t$-structures on $D'$ and $D''$ using ``recollement data'' on $D',D,D''$.  The recollement structure on the triangulated categories induces a kind of abelian-category recollement structure on the hearts 
$\scrA',\scrA,\scrA''$.  A simple formulation of this is strong enough for our purposes:

\begin{definition}
The data of abelian categories and functors
$\scrA' \stackrel{i_*}{\to} \scrA \stackrel{j^*}{\to} \scrA''$ is called a \emph{recollement triple}
if
\begin{enumerate}
\item $i_*$ and $j^*$ are exact functors and admit adjoints on both sides.
\item $i_*$ is an equivalence onto the full subcategory of $\scrA$ annihilated by $j^*$.
\item The functor $\scrA/\scrA' \to \scrA''$ induced by $j^*$ is also an equivalence.
\end{enumerate}
In this situation we will say that $\scrA$ is a \emph{recollement extension of $\scrA''$ by $\scrA'$}.
\end{definition}
We denote by $i^*$ the left adjoint and $i^!$ the right adjoint to $i_*$, and $j_!$ the left adjoint
and $j_*$ the right adjoint to $j^*$.  We also set $i_! =_\mathrm{def} i_*$ and $j^! =_\mathrm{def} j^*$. 
The adjunction morphisms provide natural maps $j_! \to j_*$ and $i^! \to i^*$.  

Sheaves and perverse sheaves form the original example of a recollement triple: If $(X,\scrS)$ is a topologically stratified space and $Z$ is a closed union of strata, then the restriction functor $\bP(X) \to \bP(X-Z)$ and the extension-by-zero functor $\bP(Z) \to \bP(X)$ exhibit $\bP(X)$ as a recollement extension of $\bP(X-Z)$ by $\bP(X)$.  Here $\bP$ can be the category of $\scrS$-constructible perverse sheaves of any fixed perversity.  In \cite{macphersonvilonen} another class of examples was introduced:

\begin{example}
\label{MVconstruction}
Suppose we are given a right exact functor $F:\scrA'' \to \scrA'$, a left exact functor $G:\scrA'' \to \scrA'$, and a natural transformation $T:F \to G$.  The MacPherson-Vilonen category $C = C(F,G;T)$ is the category of ``factorizations of $T$": its objects are tuples $(A'',A',m,n)$ where $m:F(A'') \to A'$, $n:A' \to G(A'')$, and $n \circ m = T_{A''}$.  The functors $j^*:(A'',A',m,n) \mapsto A''$ and $i_*:A' \mapsto (0,A',0,0)$
exhibit $C$ as a recollement extension of $\scrA''$ by $\scrA'$.
\end{example}

In \cite{macphersonvilonen}, MacPherson and Vilonen construct a functor $\bP(X) \to C(F,G;T)$, for certain $F,G:\bP(X -Z) \to \bP(Z)$. The functor respects all the recollement data, and it is an equivalence when $Z$ is contractible.  Since $C(F,G;T)$ is described pretty explicitly (up to a description of $\scrA'$ and $\scrA''$), this can be regarded as a ``calculation" of $\bP(X)$.  

One of the features of the MacPherson-Vilonen construction is that the inclusion $i_*:\scrA' \to C$
admits an \emph{exact} retraction: one may take $(A'',A',m,n) \mapsto A'$.  On the
other hand if a recollement triple $\scrA' \to \scrA \to \scrA''$ admits an exact retract 
$r:\scrA \to \scrA'$ one can use it to build a functor from $\scrA$ to a MacPherson-Vilonen category $C(F,G;T)$ for some $F$, $G$, and $T$.  Namely, take $F = r \circ j_!$ and $G = r \circ j_*$, and take $T$ to be the retract $r$ applied to the natural map $j_! \to j_*$.  Then
$$\scrA \to C(F,G;T):A \mapsto \big(F(j^*A) \to r(A) \to G(j^*A)\big)$$
is an exact functor.

In \cite{macphersonvilonen}, for perverse sheaves, a retract $\bP(X) \to \bP(Z)$ is built out of analytic or topological information, using vanishing cycles or perverse
links.  One point of the second part of theorem \ref{maintheorem} is that a retract exists for formal reasons:

\begin{lemma}
\label{retractions}
Let $\scrA$ be an $\bbF$-linear abelian category, such that every object has finite length, and such that each $\Hom(a,b)$ and each $\Ext^1(a,b)$ are finite-dimensional over $\bbF$.  
Let $s$ be a simple object in $\scrA$ with $\Ext^1(s,s) = 0$.
Then the recollement triple
$$\bbF\text{-Vect} \to \scrA \to \scrA/s$$
where the first functor maps $\bbF^n$ to $s^{\oplus n}$, admits an exact retraction.
\end{lemma}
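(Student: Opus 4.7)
The plan is to construct the retraction $r$ as the functor corepresented by a pro-projective cover of $s$ in $\scrA$.  Explicitly, I build a tower $s = P_0 \twoheadleftarrow P_1 \twoheadleftarrow P_2 \twoheadleftarrow \cdots$ of objects in $\scrA$ with distinguished surjections onto predecessors, and set $r(A) := \varinjlim_n \Hom_\scrA(P_n, A)$, where the direct system uses precomposition with the surjections.

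The tower is built inductively by universal extensions.  Given $P_n$, form the finite direct sum
\[
E_n := \bigoplus_{t \text{ simple}} \Ext^1_\scrA(P_n, t)^* \otimes_\bbF t,
\]
which is finite because each $\Ext^1(P_n, t)$ is finite-dimensional and vanishes for all but finitely many simples $t$ (the latter by d\'evissage on the composition factors encountered in the construction).  Take $P_{n+1}$ to fit in the universal extension $0 \to E_n \to P_{n+1} \to P_n \to 0$ classified by the identity element of $\Ext^1_\scrA(P_n, E_n) \cong \bigoplus_t \mathrm{End}_\bbF(\Ext^1(P_n, t))$.  The hypothesis $\Ext^1(s, s) = 0$ kills the $s$-summand of $E_0$, and by induction each $P_n$ has $s$ as its unique simple quotient, with $\Hom(P_n, s) = \bbF$ and $\Hom(P_n, t) = 0$ for $t \not\cong s$.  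Indeed, by construction the connecting map $\Hom(E_n, t) \to \Ext^1(P_n, t)$ in the long exact sequence is an isomorphism for each simple $t$, which via
\[
\Hom(P_n, t) \to \Hom(P_{n+1}, t) \to \Hom(E_n, t) \to \Ext^1(P_n, t)
\]
forces $\Hom(P_{n+1}, t) = \Hom(P_n, t)$.

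The retraction property $r \circ i_* = \mathrm{id}$ is then immediate: $r(s^{\oplus k}) = \Hom(P_0, s^{\oplus k}) = \bbF^k$.  For exactness, given a short exact sequence $0 \to A \to B \to C \to 0$, the long exact sequence of $\Hom(P_n, -)$ reads
\[
0 \to \Hom(P_n, A) \to \Hom(P_n, B) \to \Hom(P_n, C) \to \Ext^1(P_n, A) \to \cdots.
\]
By the same universal property of the extension, the pullback $\Ext^1(P_n, A) \to \Ext^1(P_{n+1}, A)$ has image zero when $A$ is simple, and a d\'evissage on the Jordan--H\"older length propagates this to any finite-length $A$, yielding $\varinjlim_n \Ext^1(P_n, A) = 0$.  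Taking the direct limit in the long exact sequence then collapses to the required short exact sequence $0 \to r(A) \to r(B) \to r(C) \to 0$.

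The main obstacle is the d\'evissage step: propagating the vanishing of $\varinjlim_n \Ext^1(P_n, -)$ from simple coefficients to all finite-length coefficients.  This requires controlling $\Ext^1(E_n, -)$ for the semisimple objects $E_n$ appearing in the tower, and depends on the finite-dimensionality of $\Ext^1$-groups together with finite length in an essential way.  The role of $\Ext^1(s, s) = 0$ is precisely to keep the tower's top stable at $s$ throughout, preventing the construction from picking up $s$-summands in its radicals that would spoil the uniqueness of the simple quotient.
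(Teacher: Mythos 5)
Your approach --- building a tower $P_n$ of universal extensions of $s$ and setting $r(A) = \varinjlim_n \Hom(P_n, A)$ --- differs from the paper's, which takes a filtered colimit of $\Hom(x, -)$ over \emph{all} indecomposable covers $x$ of $s$ rather than constructing a single preferred tower. Your construction has a genuine gap at the very first step: you need $E_n = \bigoplus_t \Ext^1(P_n, t)^* \otimes_\bbF t$ to be a \emph{finite} direct sum, but your parenthetical claim that $\Ext^1(P_n, t)$ vanishes for all but finitely many simples $t$ does not follow from the stated hypotheses, and is false in general. D\'evissage reduces nonvanishing of $\Ext^1(P_n, t)$ to nonvanishing of some $\Ext^1(u, t)$ with $u$ a composition factor of $P_n$, but nothing in the hypotheses bounds the number of non-isomorphic simples $t$ with $\Ext^1(u, t) \neq 0$ for a \emph{fixed} simple $u$. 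As a concrete counterexample, take $\scrA$ to be the category of finite-dimensional representations of an infinite star quiver with central vertex labelled $s$; then every $\Hom$ and $\Ext^1$ is finite-dimensional, $\Ext^1(s,s) = 0$, and yet $\Ext^1(s, t_i) \neq 0$ for infinitely many pairwise non-isomorphic simples $t_i$, so your $E_0$ would be an infinite direct sum, which does not exist in a finite-length abelian category.

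The paper circumvents this by never trying to kill all of $\Ext^1(x, -)$ at one stroke: to show the colimit of $\Ext^1(x, a)$ over indecomposable covers vanishes, given a class represented by $0 \to a \to y \to x \to 0$ one simply selects an indecomposable cover $x'$ of $s$ \emph{inside $y$} and observes that the pullback of the extension along $x' \to x$ splits. This is a local, classwise argument requiring no finiteness on the set of simples. Incidentally, the d\'evissage step you flag as the ``main obstacle'' is not one: once $\varinjlim_n \Ext^1(P_n, t) = 0$ for each simple $t$, exactness of filtered colimits combined with an induction on composition length (applied to $\Ext^1(P_n, A') \to \Ext^1(P_n, A) \to \Ext^1(P_n, A'')$) immediately yields the vanishing of $\varinjlim_n \Ext^1(P_n, A)$ for every finite-length $A$; no control of $\Ext^1(E_n, -)$ is needed.
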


\begin{proof}
We have to construct an exact functor $\scrA \to \bbF\text{-Vect}$ that takes $s$ to $\bbF$.  Gabriel constructed a pro-representable such functor in \cite{gabriel}, in greater generality, but we give a construction here as well.  An \emph{indecomposable cover} of $s$ is an object $x$
together with a surjection $x \to s$ that exhibits $s$ as the cosocle of $x$ -- \emph{i.e.} as the largest
semisimple quotient of $x$.  Such an $x$ is necessarily indecomposable.

If $y \to s$ is any surjection, it is easily verified that $y$ contains an indecomposable cover of $s$.
If $x$ is an indecomposable cover of $s$, it is also easily verified that any map from $y$ to $x$ over
$s$ is surjective.

We want to consider the limit of all indecomposable covers of $s$.  Let $C_s$ denote the category
of indecomposable covers of $s$, and let $\tilde{C}_s$ denote any \emph{filtered} category
with a functor $\tilde{C}_s \to C_s$ that is surjective on morphisms (\emph{i.e.} any morphism $f \in C_s$
is the value of some morphism $f \in \tilde{C}_s$).  Set 
$$P_s(a) = \varinjlim_{x \in \tilde{C}_s} \Hom(x,a)$$
Then $P_s(t) = 0$ for any simple $t \neq s$, since $s$ is the cosocle of each $x$, and $P_s(s) = \bbF$
since $\Hom(x,s) = \bbF$ for each $x$.  
It remains to show that $P_s$ is exact.  
Let $$0 \to a \to b \to c \to 0$$ be
a short exact sequence in $\scrA$; each $x \in C_s$ induces an exact sequence 
$$0 \to \Hom(x,a) \to \Hom(x,b) \to \Hom(x,c) \to \Ext^1(x,a)$$
Since, $\tilde{C}_s$ is filtered,
the sequence 
$$0 \to P_s(a) \to P_s(b) \to P_s(c) \to \varinjlim \Ext^1(x,a)$$ is exact as well.  To see that
$P_s$ is exact one needs to show the limit of $\Ext^1$s vanishes.   Suppose $y$ is an extension of $x$ by $a$;
we want to find an $x'$ mapping to $x$ such that the fiber product $y \times_x x' \to x'$
splits.  But we may take for $x'$ any indecomposable cover of $s$ contained in $y$.
\end{proof}

\begin{remark}
Note that the retraction $\scrA \to \bbF\text{-Vect}$ constructed has the further property that it kills simple objects not isomorphic to $s$.
\end{remark}

Define an additive functor 
$j_{!*}:\scrA'' \to \scrA$ by taking $a''$ to the image of $j_!{a''}$ in $j_*{a''}$.  We mention the following
proposition, from \cite{bbd}: 

\begin{proposition}
\label{bbd1426}
The functor $j_{!*}$ preserves simples.  Furthermore, every simple in $\scrA$ is of the form $i_!s$
where $s$ is simple in $\scrA'$, or of the form $j_{!*}s$ where $s$ is simple in $\scrA''$.
\end{proposition}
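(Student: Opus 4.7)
The plan is to first establish a stability property of $j_{!*}$ --- that it admits no nonzero subobject or quotient object in the essential image of $i_*$ --- and then to use this property for both halves of the proposition. For any $c \in \scrA'$, since $j^* i_* = 0$, the adjunctions give $\Hom(i_* c, j_* a'') = \Hom(j^* i_* c, a'') = 0$ and $\Hom(j_! a'', i_* c) = \Hom(a'', j^* i_* c) = 0$. Because $j_! a'' \twoheadrightarrow j_{!*}(a'') \hookrightarrow j_* a''$, any subobject of $j_{!*}(a'')$ of the form $i_* c$ would embed into $j_* a''$, and any quotient of this form would be a quotient of $j_! a''$; both possibilities are ruled out.

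Preservation of simples is then immediate. Suppose $s'' \in \scrA''$ is simple and $0 \neq t \subset j_{!*}(s'')$. Exactness of $j^*$ gives $j^* t \subset s''$, which is either $0$ or $s''$. If $j^* t = 0$, then $t$ lies in $\ker j^* = i_* \scrA'$, contradicting stability. If $j^* t = s''$, then $j^*(j_{!*}(s'')/t) = 0$, so the quotient lies in $i_* \scrA'$, and stability again forces it to vanish; hence $t = j_{!*}(s'')$.

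For the classification, let $s \in \scrA$ be simple. If $j^* s = 0$, the equivalence $i_* : \scrA' \xrightarrow{\sim} \ker j^*$ gives $s \cong i_* t = i_! t$ with $t \in \scrA'$ simple. Otherwise I would run the following parallel argument. The counit $\epsilon : j_! j^* s \to s$ is surjective: a triangular identity shows $j^*(\epsilon)$ is an isomorphism, so $\epsilon \neq 0$, and its image --- a nonzero subobject of the simple $s$ --- must be $s$. Dually, the unit $\eta : s \to j_* j^* s$ is injective. Moreover $j^* s$ is simple: a proper nonzero subobject $u \subset j^* s$ would give an image of $j_! u \to j_! j^* s \twoheadrightarrow s$ whose $j^*$ equals $u$, yet which must be $0$ or $s$. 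Finally, to conclude $s \cong j_{!*}(j^* s)$, I would show that the canonical natural transformation $\alpha : j_! j^* s \to j_* j^* s$ equals $\eta \circ \epsilon$: under the adjunction isomorphism $\Hom(j_! j^* s, j_* j^* s) \cong \Hom(j^* s, j^* s)$ both correspond to $\mathrm{id}_{j^* s}$ (the former by the defining property of $\alpha$, the latter by the triangular identities). Hence the image of $\alpha$, which is $j_{!*}(j^* s)$ by construction, equals the image of $\eta \circ \epsilon$, which is $s$ since $\epsilon$ is epic and $\eta$ monic.

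The one step that requires care is the identification $\alpha = \eta \circ \epsilon$, a routine but slightly fiddly unwinding of the recollement adjunction data; every other ingredient is a short diagram chase using exactness of $j^*$ and the stability property from the first paragraph.
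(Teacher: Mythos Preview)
Your proof is correct, and there is nothing to compare it against: the paper does not prove this proposition but merely cites it from \cite{bbd}. What you have written is essentially the standard argument one finds there, carried out cleanly.

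One minor remark: in the step showing $j^* s$ is simple, you implicitly use that $j^*$ is exact to identify $j^*(\mathrm{im}(j_! u \to s))$ with $\mathrm{im}(u \hookrightarrow j^* s) = u$; this is fine but worth making explicit, since $j_!$ itself need not preserve monomorphisms. The identification $\alpha = \eta \circ \epsilon$ that you flag as ``slightly fiddly'' is indeed routine once one observes that both maps are adjoint to $\mathrm{id}_{j^* s}$ under $\Hom(j_! j^* s, j_* j^* s) \cong \Hom(j^* s, j^* s)$, exactly as you say.
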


Let $\scrA$ and $\scrB$ be two recollement extensions of $\scrA''$ by $\scrA'$.  If one is given an exact functor $R:\scrA \to \scrB$, together with natural isomorphisms $R \circ i_* \cong i_*$ and $j^* \circ R \cong j^*$, then using the adjunctions we may define maps $R\circ j_* \to j_*$, $j_!  \to R \circ j_!$, 
$i^* \circ R \to i^*$, and $i^! \to i^! \circ R$.

\begin{definition}
Let $\scrA' \stackrel{i_*}{\to} \scrA \stackrel{j^*}{\to} \scrA''$ and $\scrA' \stackrel{i_*}{\to} \scrB \stackrel{j^*}{\to} \scrA''$ be two recollement triples.  A \emph{morphism of recollement extensions} from $\scrA$ to $\scrB$ is an exact functor $R:\scrA \to \scrB$
together with isomorphisms $R \circ i_* \cong i_*$ and $j^* \circ R \cong j^*$, such that the induced
morphisms $R \circ j_* \to j_*$, $j_! \to R \circ j_!$, $i^* \circ R \to i^*$, and $i^! \to i^! \circ R$ are isomorphisms.
\end{definition}

Now let $\scrC$ be a stack of type P on $X$.  For each point $x \in X$, the stalk $\scrC_x$ has a unique
simple $s_x$ supported on the stratum containing $x$.  
Since $\Ext^1(s_x,s_x) = 0$, the recollement triple
$$\bbF\text{-Vect} \to \scrC_x \to \scrC_x/s_x$$
admits a retraction $r$ by lemma \ref{retractions}.  The retraction induces a functor
$R:\scrC_x \to C(F,G;T)$, where $F$ and $G$ map $\scrC_x/s_x$ to $\bbF\text{-Vect}$.  
Note that,
if $U$ is a regular neighborhood of $x$ and $Y$ is the stratum containing $x$,
then $\scrC_x/s_x \cong \scrC(U - U \cap Y)$.  

\begin{proposition}
\label{ff}
Let $(X,\scrS)$ be a topologically stratified space, and let $\scrC$ be a stack of type P on $X$.  Let $x \in X$ and $F,G$, and $R$ be as above.  The functor $R:\scrC_x \to C(F,G;T)$ is fully faithful.
\end{proposition}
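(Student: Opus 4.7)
The plan is to prove faithfulness and fullness of $R$ separately, using the adjunctions in the recollement $\bbF\text{-Vect} \xrightarrow{i_*} \scrC_x \xrightarrow{j^*} \scrC_x/s_x$ together with the exactness of $r$. As preparation, I would apply $r$ to the canonical sequences $j_!j^*A \to A \to i_*i^*A \to 0$ and $0 \to i_*i^!B \to B \to j_*j^*B$ for $A,B \in \scrC_x$; exactness of $r$ identifies $\operatorname{coker}(m_A) \cong i^*A$ and $\ker(n_B) \cong i^!B$ canonically. Write $\pi_A: r(A) \twoheadrightarrow i^*A$ and $\iota_B: i^!B \hookrightarrow r(B)$ for the resulting epi and mono.

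Faithfulness. If $R(f) = 0$, then $j^*f = 0$, so $f$ factors through the unit $A \to i_*i^*A$ as some $g: i_*i^*A \to B$; by $(i_*, i^!)$-adjunction, $g = (i_*i^!B \hookrightarrow B)\circ i_*(\tilde g)$ for a unique $\tilde g: i^*A \to i^!B$. Applying $r$ and using the preparation step writes $r(f) = \iota_B \circ \tilde g \circ \pi_A$; since $\pi_A$ is epi and $\iota_B$ is mono, this forces $\tilde g = 0$, hence $f = 0$.

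Fullness. Given a compatible pair $(f'', f')$, I would form the pullback $P := A \times_{j_*j^*B} B$ along $\alpha := j_*(f'')\eta_A$ and $\eta_B$. A direct $r$-computation using $n_B f' = G(f'')n_A$ shows $\alpha$ factors through $\operatorname{image}(\eta_B)$, yielding a short exact sequence $0 \to i_*i^!B \to P \to A \to 0$. The crucial claim is that its class in $\Ext^1_{\scrC_x}(A, i_*i^!B)$ vanishes. To see this, observe that the counit $\epsilon_P: j_!j^*A = j_!j^*P \to P$ satisfies $(P \to A)\epsilon_P = \epsilon_A$ and $(P \to B)\epsilon_P = \epsilon_B\circ j_!(f'')$; a short $r$-computation using $f' m_A = m_B F(f'')$ shows $\epsilon_P$ kills $\ker(\epsilon_A) = i_*\ker(m_A)$ and hence descends to a section of $P \to A$ over $A' := \operatorname{image}(\epsilon_A)$. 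Thus $[P|_{A'}] = 0$, and via the long exact sequence for $0 \to A' \to A \to i_*i^*A \to 0$ the class $[P]$ lies in the image of $\Ext^1_{\scrC_x}(i_*i^*A, i_*i^!B)$. This last group vanishes because the type P axiom $\Ext^1(s_x, s_x) = 0$ forces $\Ext^1_{\scrC_x}(i_*V, i_*W) = 0$ for all finite-dimensional $V, W$: every closed-supported object is a direct sum of copies of $s_x$. Hence $[P] = 0$ and a section $\sigma: A \to P$ exists.

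Setting $\tilde f := (P \to B)\sigma$, one has $j^*\tilde f = f''$. A short diagram chase using both compatibilities shows $r(\tilde f) - f'$ vanishes on $\operatorname{image}(m_A) = \ker\pi_A$ (from $f' m_A = m_B F(f'')$) and lands in $\ker n_B = i^!B$ (from $n_B f' = G(f'')n_A$), so it factors uniquely as $\iota_B \circ \tau' \circ \pi_A$ for a unique $\tau': i^*A \to i^!B$. Subtracting the corresponding morphism $A \twoheadrightarrow i_*i^*A \xrightarrow{i_*\tau'} i_*i^!B \hookrightarrow B$ from $\tilde f$ produces $f$ with $R(f) = (f'', f')$. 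The main obstacle is the vanishing of $[P]$ in $\Ext^1_{\scrC_x}(A, i_*i^!B)$; this is the only step invoking a type P axiom beyond the abstract recollement formalism.
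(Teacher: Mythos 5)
Your proof is correct, but it follows a genuinely different route from the paper's. The paper reduces fullness to the cases $d = j_*d'$ and $d = s_x^{\oplus n}$ (where it is immediate from the adjunctions), together with injectivity of $\Ext^1(c,s_x)\to\Ext^1(Rc,Rs_x)$ (by lifting a splitting of $RE$ to a splitting of $E$), and then bootstraps to general $d$ by applying the five lemma to the morphism of long exact sequences coming from $0\to K\to d\to j_*j^*d\to C\to 0$; this is Vilonen's devissage argument from \cite{vilonenfda}. You instead construct the preimage of a compatible pair $(f'',f')$ directly: form the pullback $P = A\times_{j_*j^*B}B$, observe that $P\to A$ is an extension by $i_*i^!B$ which splits because $\Ext^1_{\scrC_x}$ between objects supported on the closed stratum vanishes (axiom (3) plus the fact that such objects are sums of $s_x$), then subtract a closed-supported correction term. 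Your faithfulness argument (factor through $i_*i^*A$ then through $i_*i^!B$, and kill the resulting $\tilde g$ since $\pi_A$ is epi and $\iota_B$ is mono) is also more explicit than the paper's one-line ``clearly faithful.'' Both arguments ultimately hinge on the same $\Ext^1$-vanishing for closed-supported objects; the paper's is shorter and more structural, while yours is more constructive and gives an explicit recipe for the lift. One subtlety in your version worth being aware of, though it does turn out fine: after producing the section $\sigma$ from $[P]=0$, you implicitly need $\sigma\circ\epsilon_A = \epsilon_P$ for the final diagram chase; this holds for \emph{any} section by naturality of the counit together with the fact that $j^*\sigma$ is the (unique) inverse of the isomorphism $j^*(P\to A)$, so no careful choice of $\sigma$ is required, but this deserves to be said.
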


\begin{proof}
The functor $R$ is clearly faithful.  We will show that it is full following the proof of proposition 1.2 in \cite{vilonenfda}.  We have to show that for $c$ and $d$ in $\scrC_x$ the map $\Hom(c,d) \to \Hom(Rc,Rd)$ is an isomorphism.  This is clear from the adjunctions in case $d$ is of the form $j_* d'$ for some $d' \in \scrC_x/s_x$, or of the form $s_x^{\oplus n}$.  In the latter case we furthermore have that $\Ext^1(c,s_x) \to \Ext^1(Rc,Rs_x)$ is injective, since if $0 \to s_x \to E \to c \to 0$ is an exact sequence which splits after applying $R$, then we may lift a retraction $RE \to R s_x$ to $E \to s_x$, using the fact that $\Hom(E,s_x) \cong \Hom(RE,Rs_x)$.

For a general $d$, consider the exact sequence
$0 \to K \to d \to j_* j^* d \to C \to 0$.
The objects $K$ and $C$ are direct sums of copies of $s_x$.  Let $I$ be the image of $d$ in $j_* j^* d$; since $\Hom(c,j_*j^* d) \cong \Hom(Rc,Rj_* j^* d)$ and $\Hom(c,C) \cong \Hom(Rc,RC)$ the map $\Hom(c,I) \to \Hom(Rc,RI)$ is also an isomorphism.  Now we conclude that $\Hom(c,d) \to \Hom(Rc,Rd)$ is surjective from the five lemma applied to the map of exact sequences:
$$
\xymatrix{
0 \ar[r] & \Hom(c,K) \ar[r] \ar[d]^{\cong} & \Hom(c,d) \ar[r] \ar[d] & \Hom(c,I) \ar[r] \ar[d]^{\cong} & \Ext^1(c,K) \ar[d]^{\text{inj.}} \\
0 \ar[r] & \Hom(Rc,RK) \ar[r] & \Hom(Rc,Rd) \ar[r]& \Hom(Rc,RI) \ar[r] & \Ext^1(Rc,RK)
}
$$
\end{proof}

\subsection{Tilting extensions}

To show that $R$ is essentially surjective we will follow the proofs of the first two propositions of \cite{bbm}.  We need to translate some of the tilting formalism into our context.
Let us say that an object $M$ of either $\scrC_x$ or $C(F,G;T)$ is \emph{tilting} if we have 
$\Ext^1(s_x,M) = 0$ and $\Ext^1(M,s_x) = 0$.

\begin{proposition}
\label{prop-tilting}
Let $c$ be an object of $\scrC_x/s_x$.  There is a tilting object $c^\tilt$ such that $Rc^\tilt$ is also tilting, and such that $j^* c^\tilt \cong j^*R c^\tilt \cong c$.
\end{proposition}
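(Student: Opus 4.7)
The plan is to adapt the two-step tilting construction of the first two propositions of \cite{bbm}: I will produce $c^\tilt$ as a suitable universal extension of $j_!c$ by copies of $s_x$. The vanishings $\Ext^1(s_x,s_x) = 0 = \Ext^2(s_x,s_x)$ from axiom (3) of definition \ref{defPstacks} make the long-exact-sequence bookkeeping clean, and an adjunction argument will provide the auxiliary vanishing $\Ext^1_{\scrC_x}(j_!c, s_x) = 0$ so that only one universal extension is required. The main subtlety will be ensuring that the size and the class of this extension are chosen so that $R c^\tilt$ is tilting in $C(F,G;T)$ as well, not merely in $\scrC_x$.

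Let $n_1$ be large enough to span the relevant Ext groups in both $\scrC_x$ and $C(F,G;T)$, and choose an extension class in $\Ext^1_{\scrC_x}(s_x^{\oplus n_1}, j_!c)$ whose image in $\Ext^1_{C(F,G;T)}(s_x^{\oplus n_1}, j_!c)$ has components spanning that group. Take
\[
0 \to j_!c \to X_1 \to s_x^{\oplus n_1} \to 0
\]
to be the corresponding extension and set $c^\tilt := X_1$. The long exact sequence for $\Hom(s_x,-)$, combined with $\Ext^1(s_x,s_x) = 0$ and the spanning condition, gives $\Ext^1_{\scrC_x}(s_x, c^\tilt) = 0$. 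For the dual vanishing, the long exact sequence for $\Hom(-, s_x)$ yields $\Ext^1_{\scrC_x}(c^\tilt, s_x) \cong \Ext^1_{\scrC_x}(j_!c, s_x)$, and this group vanishes by an adjunction argument: for any $0 \to s_x \to E \to j_!c \to 0$ in $\scrC_x$, the unique map $j_!c \to E$ classified by $\mathrm{id}_c \in \Hom(c, j^*E) \cong \Hom(j_!c, E)$ is automatically a splitting, since its composition with $E \to j_!c$ is the unique endomorphism of $j_!c$ inducing $\mathrm{id}_c$ after $j^*$, namely $\mathrm{id}_{j_!c}$. Applying the exact functor $j^*$ to the defining extension and using $j^*s_x = 0$ gives $j^* c^\tilt \cong c$, and $j^*Rc^\tilt \cong c$ follows from the compatibility $j^* \circ R = j^*$ built into $R$.

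The hard part is verifying that $Rc^\tilt$ is tilting in $C(F,G;T)$. Using the explicit formulas $\Ext^1_{C(F,G;T)}(s_x, M) = \mathrm{coker}(n_M)$ and $\Ext^1_{C(F,G;T)}(M, s_x) = \Hom(\ker m_M, \bbF)$ for an MV object $M = (A'', A', m_M, n_M)$, and writing $\alpha: j_!c \hookrightarrow c^\tilt$ for the inclusion-counit and $\beta: c^\tilt \to j_*c$ for the unit, one has $m_{Rc^\tilt} = r(\alpha)$ and $n_{Rc^\tilt} = r(\beta)$. Injectivity of $r(\alpha)$ is automatic from exactness of $r$, since $\alpha$ is a genuine subobject inclusion, so $\Ext^1_{C(F,G;T)}(Rc^\tilt, s_x) = 0$. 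Surjectivity of $r(\beta)$ reduces to the $\bbF^{n_1}$-part of $rc^\tilt$ projecting onto $\mathrm{coker}(T_c) \subset Gc$ under $r(\beta)$, which by the explicit MV description is equivalent to the extension class of $X_1$ mapping to a spanning tuple in $\Ext^1_{C(F,G;T)}(s_x^{\oplus n_1}, j_!c) = \mathrm{coker}(T_c)^{\oplus n_1}$. The main obstacle is producing such a class: by proposition \ref{ff} the comparison map $\Ext^1_{\scrC_x}(s_x, j_!c) \to \mathrm{coker}(T_c)$ is a priori only injective, and surjectivity is what needs to be established. I would prove this surjectivity by directly constructing, for each class $\phi \in Gc / T_c(Fc)$, an explicit $\scrC_x$-extension of $s_x$ by $j_!c$ realizing it, using the recollement adjoints $j_*$ and $i_*$ to assemble the required object from $j_*c$ and a copy of $s_x$.
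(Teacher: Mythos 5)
Your construction is genuinely different from the paper's and, as you yourself flag at the end, it has an open gap precisely at the crucial point. You build $c^\tilt$ as a universal extension of $j_!c$ by $s_x^{\oplus n_1}$ and then try to kill $\Ext^1(s_x, c^\tilt)$ and $\Ext^1(s_x, Rc^\tilt)$ simultaneously by choosing the extension class to span both Ext groups. To do this in $C(F,G;T)$ you would need the comparison map
\[
\Ext^1_{\scrC_x}(s_x, j_!c) \longrightarrow \Ext^1_{C(F,G;T)}(s_x, Rj_!c) = \mathrm{coker}(T_c)
\]
to be surjective, and proposition \ref{ff} only gives injectivity. You promise to ``prove this surjectivity by directly constructing'' extensions but do not supply the argument; as written, the proof does not close. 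The claim is in fact true -- one can compute that both sides are isomorphic to $\Hom(s_x, b) \cong r(b)$ where $b = \mathrm{coker}(j_!c \to j_*c)$, using $\Hom(s_x, j_*c) = \Ext^1(s_x, j_*c) = 0$ and the vanishing of $\Ext^i(s_x, s_x)$ for $i = 1, 2$ -- but that computation is essentially a disguised form of the paper's argument, so your route is not actually shorter.

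The paper avoids the Ext-surjectivity question entirely. It starts from the four-term exact sequence
\[
0 \to a \to j_!c \to j_*c \to b \to 0
\]
in $\scrC_x$, where $a$ and $b$ are supported on the closed stratum and hence, by axiom (3), are finite direct sums of $s_x$. Since $\Ext^2(s_x, s_x) = 0$, the Yoneda class of this $2$-extension in $\Ext^2(b, a)$ vanishes, which yields an object $c^\tilt$ with $j_!c \subset c^\tilt$, $c^\tilt/a \cong j_*c$, and $c^\tilt/j_!c \cong b$. The two resulting short exact sequences $0 \to a \to c^\tilt \to j_*c \to 0$ and $0 \to j_!c \to c^\tilt \to b \to 0$ immediately give the two $\Ext^1$-vanishings, using $\Ext^1(s_x, a) = \Ext^1(s_x, j_*c) = 0$ and $\Ext^1(j_!c, s_x) = \Ext^1(b, s_x) = 0$. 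The crucial payoff is that $R$ is exact and compatible with $j_!$, $j_*$, and $i_*$, so applying $R$ to these two short exact sequences lands you in the identical situation in $C(F,G;T)$, and $Rc^\tilt$ is tilting for the same formal reasons -- no comparison of Ext groups across $R$ is needed. In short: your use of $\Ext^1(s_x,s_x)=0$ and the adjunction vanishing $\Ext^1(j_!c, s_x)=0$ is correct, but you are not using $\Ext^2(s_x,s_x)=0$ where it does its real work, namely to split the $j_! \to j_*$ $2$-extension once and for all, rather than to engineer a universal extension whose good behavior under $R$ then has to be checked by hand.
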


\begin{proof}
Consider the exact sequence
$$0 \to a \to j_! c \to j_* c \to b \to 0$$
in $\scrC_x$.  Since $\Ext^2(b,a) = 0$ we may find an object $c^\tilt$ containing $j_! c$ as a subobject and isomorphisms $c^\tilt/a \cong j_* c$ and $c^\tilt/j_! c \cong b$.  The short exact sequences
$$
\Ext^1(s_x,a)  \to \Ext^1(s_x,c^\tilt) \to  \Ext^1(s_x,j_* c)$$
and
$$
\Ext^1(j_! \overline{c},s_x)  \to \Ext^1(\overline{c}^\tilt,s_x) \to  \Ext^1(b,s_x) 
$$  
show that the middle $\Ext^1$-groups vanish and that $c^\tilt$ is tilting.  The analogous exact sequences in $C(F,G;T)$ show that $Rc^\tilt$ is tilting as well.
\end{proof}

\begin{theorem}
\label{th2holds}
Let $(X,\scrS)$ be a topologically stratified space, and let $\scrC$ be a stack of type P on $X$.  Let $x \in X$ and $F,$ $G$, $T$, and $R$ be as above.  The functor $R:\scrC_x \to C(F,G;T)$ is an equivalence of categories.
\end{theorem}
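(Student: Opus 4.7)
Since Proposition \ref{ff} already establishes that $R$ is fully faithful and exact, the remaining task is essential surjectivity. Following the tilting strategy of \cite{bbm}, I would proceed in two stages: first show that every tilting object of $C(F,G;T)$ lies in the essential image of $R$, and then show that every object of $C(F,G;T)$ can be presented in terms of tiltings.

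For the first stage, given a tilting $M \in C(F,G;T)$ with $c := j^* M$, Proposition \ref{prop-tilting} produces a tilting $c^\tilt \in \scrC_x$ with $R c^\tilt$ tilting in $C(F,G;T)$ and $j^*(R c^\tilt) = c$. The key claim is then that any two tilting objects of $C(F,G;T)$ with the same $j^*$-value are isomorphic, so that $M \cong R c^\tilt$. This rigidity follows because any such tilting $T$ necessarily factors the canonical map $j_! c \to j_* c$ through itself; the tilting conditions $\Ext^1(s_x, T) = \Ext^1(T, s_x) = 0$ pin down the quotient $T / j_! c$ and the kernel of $T \to j_* c$ as the specific sums of $s_x$ appearing in the 4-term sequence $0 \to a \to j_! c \to j_* c \to b \to 0$ used in Proposition \ref{prop-tilting}; and $\Ext^2(s_x, s_x) = 0$ makes the resulting Yoneda splicing unique up to isomorphism.

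For the second stage, given $Y \in C(F,G;T)$, I would build a tilting resolution by induction on the length of $Y$. Starting from a tilting lift $T'$ of $c := j^* Y$ (supplied by the first stage), one lifts the canonical map $T' \to j_* c$ along the unit $Y \to j_* c$; the obstruction lives in $\Ext^1(T', i_* i^! Y)$, which vanishes because $i_* i^! Y$ is a sum of copies of $s_x$ and $T'$ is tilting. The cokernel of the resulting map $T' \to Y$ is supported on the stratum through $x$, hence also a sum of copies of $s_x$, and one builds a tilting $T$ surjecting onto $Y$ by a fibre-product extension of $T'$ by these copies of $s_x$ (checking again via Ext-vanishing against $s_x$ that the extension is tilting). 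The kernel has strictly smaller length, so induction produces a finite tilting resolution of $Y$. Lifting this resolution term-by-term via the first stage and lifting the differentials via the full faithfulness of $R$ (Proposition \ref{ff}), the cokernel of the lifted complex in $\scrC_x$ is, by exactness of $R$, a preimage of $Y$.

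The main obstacle, I expect, is the rigidity claim in the first stage. The axioms of type P supply none of the Hodge-theoretic or duality-based tools that would otherwise distinguish tilting objects, so the uniqueness of the tilting lift must be extracted entirely from the Ext-vanishing in Proposition \ref{prop-tilting} and the tilting hypotheses themselves — essentially a careful diagram chase in a situation whose available exactness properties only just suffice. Once that rigidity is in hand, the remainder of the argument is standard homological bookkeeping.
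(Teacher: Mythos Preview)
Your Stage 1 rigidity claim is false: tilting objects of $C(F,G;T)$ with the same $j^*$-value need not be isomorphic. The simplest counterexample is $Rc^\tilt$ versus $Rc^\tilt \oplus s_x$ --- both are tilting (since $\Ext^1(s_x,s_x)=0$) and both restrict to $c$, but they are not isomorphic. More concretely, in the MacPherson--Vilonen category a tuple $(c,V,m,n)$ is tilting precisely when $m$ is injective and $n$ is surjective, and then $\dim V$ is free to vary; consequently neither $T/j_!c \cong s_x^{\dim\mathrm{coker}(m)}$ nor $\ker(T\to j_*c)\cong s_x^{\dim\ker(n)}$ is pinned down by the tilting condition. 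Your argument that they are --- via $\Ext^2(s_x,s_x)=0$ --- therefore cannot go through as stated, and without this rigidity your Stage 2 has nothing to feed on.

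The paper sidesteps this issue entirely. Rather than showing that tiltings lie in the essential image and then resolving an arbitrary $Y$ by tiltings, it fixes a \emph{single} tilting $c^\tilt$ for each $c\in\scrC_x/s_x$ and uses it to parametrize \emph{all} lifts of $c$ at once: both the category $\bE_1(c)$ of lifts of $c$ to $\scrC_x$ and the category $\bE_2(c)$ of lifts to $C(F,G;T)$ are shown to be equivalent to a common category $\bF(c)$ of factorizations $i^!c^\tilt \to \Phi \to i^*c^\tilt$. The equivalences are realized by explicit three-term complexes built from $c^\tilt$ (respectively $Rc^\tilt$), and the argument uses only that $j_!c \hookrightarrow c^\tilt \twoheadrightarrow j_*c$, never any uniqueness of the tilting lift. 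Your strategy is in the right spirit, but to repair it you would at minimum have to replace the false rigidity step with a direct argument that the \emph{specific} object $Rc^\tilt$ suffices to reconstruct every lift of $c$ in $C(F,G;T)$ --- and at that point you are essentially rediscovering the paper's $\bF(c)$ construction.
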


\begin{remark}
A similar theorem is the main result of \cite{franjoupirashvili}.  There, a criterion is given for a functor between two different recollement extensions of abelian categories to be an equivalence, and this criterion is essentially a weakened form of our axiom (3).  However, it is also assumed that the categories involved have enough projectives, which is not the case in our situation, so we cannot apply this result directly.
\end{remark}

\begin{proof}
After proposition \ref{ff} it remains to show that $R$ is essentially surjective.  For each $c \in \scrC_x/s_x$ let $\bE_1(c)$ be the category of lifts of $c$ to $\scrC_x$ and let $\bE_2(c)$ be the category of lifts of $c$ to $C(F,G;T)$.  To show that $R$ is essentially surjective it suffices to show that $R$ induces an equivalence of categories $\bE_1(c) \to \bE_2(c)$.

Let us fix a tilting object $c^\tilt \in \scrC_x$ as in proposition \ref{prop-tilting}.  Set $\Psi(c) = i^! c^\tilt$ and $\Psi'(c) = i^*c^\tilt$, let $\tau$ denote the natural map $\Psi \to \Psi'$, and let $\bF(c)$ be the category of factorizations $\Psi \to \Phi \to \Psi'$ of $\tau$.  We will construct equivalences $\bE_1(c) \cong \bF(c)$ and $\bE_2(c) \cong \bF(c)$ commuting with the functor $\bE_1 \to \bE_2$, which will imply that it is an equivalence.

For each $\tilde{c} \in \bE_1(c)$ consider the complex
$$K(\tilde{c}):=[\cdots 0 \to j_! c \to \tilde{c} \oplus c^\tilt \to j_* c \to 0 \cdots]$$
in $\scrC_x$ associated to the following commutative square, regarded as a double complex:
$$\xymatrix{
j_! c \ar[r] \ar[d] & c^\tilt \ar[d] \\
\tilde{c} \ar[r] & j_* c}
$$
In this square, the top arrow is injective and the right arrow is surjective; furthermore, all four maps are isomorphisms modulo $s_x$.  It follows that the cohomology of this complex is concentrated in degree zero, and is supported on the stratum containing $x$.  Define $\Phi(\tilde{c}) = H^0(K(\tilde{c}))$.  The morphism $\Psi \to \Phi$ is defined at the level of complexes:
$$\xymatrix{
0 \ar[r] \ar[d] & c^\tilt \ar[r] \ar[d]^{(0,id)} & j_* c \ar[d]^{=} \\
j_! c \ar[r] & \tilde{c} \oplus c^\tilt \ar[r] & j_* c }$$
and the morphism $\Phi \to \Psi'$ is defined similarly.

We may define a similar functor $\bE_2 \to \bF$ by using $Rc^\tilt$ in place of $c^\tilt$; the map $\bE_1 \to \bF$ is then isomorphic to the composition $\bE_1 \to \bE_2 \to \bF$.

Now let us define an inverse functor $\bF \to \bE_1$.  We will send $\Psi \stackrel{\alpha}{\to} \Phi \stackrel{\beta}{\to} \Psi'$ to a subquotient of $i_!\Phi \oplus c^\tilt$.  Namely, we send it to the cohomology of the complex $L(\Phi) = L(\Phi,\alpha,\beta)$, also obtained from a commutative square:
$$L(\Phi) :=[\Psi \to \Phi \oplus c^\tilt \to \Psi']$$
Note that $L(K(\tilde{c}))$ is the cohomology of the complex associated to the following double complex:
$$
\xymatrix{
0 \ar[r] \ar[d] & j_! c \ar[r]^{=} \ar[d] & j_! c \ar[d] \\
c^\tilt \ar[r] \ar[d] & \tilde{c} \oplus c^\tilt \oplus c^\tilt \ar[r] \ar[d] & c^\tilt \ar[d] \\
j_* c \ar[r]^{=} & j_* c \ar[r] & 0}$$
The top and bottom rows of this complex are acyclic, and the cohomology of the middle row is easily seen to be $\tilde{c}$.  A natural isomorphism $K(L(\Phi)) \cong \Phi$ may be constructed similarly, so the functor $\bE_1 \to \bF$ is an equivalence.  Replacing $c^\tilt$ with $Rc^\tilt$ in the above shows $\bE_2 \to \bF$ is an equivalence as well.  This completes the proof.
\end{proof}

\section{The category of global objects}
\label{secglobal}

In this section, we prove part (3) of theorem \ref{maintheorem}.  That is, we show that the category $\scrC(X)$ of global objects of a stack of type P, on a space with 2-connected strata, is equivalent to the category of modules over a finite-dimensional algebra.  The proof requires us to add an additional assumption to our stratification: that strata ``have regular neighborhoods'' as in the following definition.  I do not know whether this condition is satisfied for every topologically stratified space, but it is always satisfied in practice -- for instance, Thom-Mather stratifications come with such neighborhoods or ``tube systems'' by definition.

\begin{definition}
\label{regnbd}
Let $S$ be a stratum in a topologically stratified space $(X,\scrS)$.  A \emph{regular neighborhood} for $S$ is a tuple $(U,L,\pi)$, where $U \subset X$ is an open subset containing $S$, $L$ is another topologically stratified space, and $\pi: U \to S$ is a continuous map that exhibits $U$ as a fiber bundle with fiber $CL$, the open cone on $L$.  We furthermore assume that the transition functions for a local trivialization of $\pi$ can be taken to be stratum-preserving.

Let us say that $(X,\scrS)$ \emph{has regular neighborhoods} if every stratum of $X$ has a regular neighborhood.
\end{definition}

\begin{lemma}
\label{regnbdlem}
Let $(X,\scrS)$ be a topologically stratified space, and let $\scrC$ be an $\scrS$-constructible stack on $X$.  Let $S$ be a stratum of $X$ and let $(U,L,\pi)$ be a regular neighborhood of $S$.  Let $i$ denote the inclusion map $S \hookrightarrow X$.  Then there is a natural equivalence of stacks $i^* \scrC \cong \pi_* (\scrC\vert_U)$.
\end{lemma}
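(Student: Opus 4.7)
The plan is to build a natural morphism of stacks $\phi: \pi_*(\scrC|_U) \to i^*\scrC$ on $S$ and verify that it is a stalkwise equivalence. For each open $V \subset S$ the preimage $\pi^{-1}(V) \subset U \subset X$ is an open subset of $X$ containing $V$, so via the universal property defining $i^*$ (as the stackification of $V \mapsto \wtwocolim{W \supset V}{\scrC(W)}$) there is a canonical functor $\scrC(\pi^{-1}(V)) \to (i^*\scrC)(V)$. These functors are compatible with restriction to smaller opens in $S$, and so they assemble into a morphism of prestacks which extends uniquely to a morphism of the associated stacks.

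Next I would compute the stalks at a point $s \in S$. By construction $(i^*\scrC)_s = \scrC_s$, the stalk of $\scrC$ at $s$ viewed inside $X$, while $(\pi_*(\scrC|_U))_s = \wtwocolim{V \ni s}{\scrC(\pi^{-1}(V))}$ with $V$ running over open neighborhoods of $s$ in $S$. The geometric input is that the $V \ni s$ which are homeomorphic to $\bbR^{\dim S}$ and small enough to trivialize $\pi$ form a cofinal system: for any such $V$, the local triviality of $\pi$ yields a stratum-preserving homeomorphism $\pi^{-1}(V) \cong V \times CL \cong \bbR^{\dim S} \times CL$, exhibiting $\pi^{-1}(V)$ as a regular neighborhood of $s$ in $X$. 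The $\scrS$-constructibility of $\scrC$ then forces the restriction $\scrC(\pi^{-1}(V)) \to \scrC_s$ to be an equivalence of categories, and this restriction equivalence is precisely the $V$-component of $\phi_s$.

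Since the transition maps between the $\scrC(\pi^{-1}(V))$ on this cofinal system are all equivalences (by two-out-of-three applied to the triangles they form with $\scrC_s$), the $2$-colimit $(\pi_*(\scrC|_U))_s$ is canonically equivalent to $\scrC_s$ via $\phi_s$. Hence $\phi$ is a stalkwise equivalence and therefore an equivalence of stacks. The hardest step, I expect, will not be conceptual but rather the bookkeeping of 2-categorical filtered colimits, stackification, and the universal property of $i^*$: one must do this carefully enough to identify the stalk of $\phi$ with the evident restriction equivalence produced by constructibility, rather than only with some a priori different equivalence obtained by abstract nonsense.
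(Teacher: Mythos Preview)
Your proposal is correct and follows essentially the same approach as the paper: construct the comparison morphism via restriction functors and the universal property of $i^*\scrC$, then verify it is a stalkwise equivalence by using that $\pi^{-1}$ of a small contractible neighborhood of $s$ in $S$ is a conical neighborhood of $s$ in $X$. The only cosmetic difference is that where you unpack the stalk computation via cofinality and two-out-of-three, the paper invokes theorem~3.13 of \cite{epcs} directly to identify both $\scrC_s$ and $(\pi_*\scrC)_s$ with $\scrC(\pi^{-1}(W))$ for a single contractible $W$.
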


In particular, the category $(i^*\scrC)(S)$ of global objects of $i^* \scrC$ is equivalent to the category $\scrC(U) = \pi_* \scrC(S)$ of $\scrC$-objects defined over $U$.

\begin{proof}
If $S'$ is an open subset of $S$
then the restriction functors $\scrC(\pi^{-1}(S')) \to \scrC(V)$ assemble to a functor $\scrC(\pi^{-1}(S')) \to \twocolim_{V} \scrC(V)$, where $V$ runs through the open subsets of $\pi^{-1}(S')$ containing $S'$.  This defines a morphism from $\pi_* \scrC$ to the prestack $i_p^* \scrC$, which in turn maps naturally to $i^* \scrC$, the stackification of $i_p^* \scrC$.  To see that the composition of these morphisms is an equivalence it suffices to show that for each $s \in S$, the functor on stalks $(\pi_* \scrC)_s \to (i^* \scrC)_s$ is an equivalence (note that $(i^* \scrC)_s$ is naturally equivalent to $\scrC_s$).  Let $W\subset S$ be a contractible neighborhood of $s$.  Then $\pi^{-1}(W)$ is a conical neighborhood of $s$ in $U$.  It follows from theorem 3.13 of \cite{epcs} that we have $\scrC_s \cong \scrC(\pi^{-1}(W))$ and $(\pi_* \scrC)_s \cong \pi_* \scrC(W) = \scrC(\pi^{-1}(W))$.  This completes the proof.
\end{proof}

\begin{theorem}
\label{th3holds}
Let $(X,\scrS)$ be a topologically stratified space that has regular neighborhoods, and let $\scrC$ be a stack of type P on $X$.  Suppose that $(X,\scrS)$ has only finitely many strata, and that each stratum is 2-connected.  Then the category $\scrC(X)$ is equivalent to the category of finite-dimensional modules over a finite-dimensional $\bbF$-algebra.
\end{theorem}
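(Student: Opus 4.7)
The plan is to proceed by induction on the number of strata of $(X,\scrS)$, mirroring the Mirollo--Vilonen argument but globalizing the MacPherson--Vilonen structure theorem established at the stalk level in Theorem~\ref{th2holds}. In the base case $X$ consists of a single 2-connected stratum, so the locally constant stack $\scrC$ is constant and $\scrC(X)\cong\scrC_x$ for any $x$. The axioms give a unique simple $s_x$ with $\Ext^1(s_x,s_x)=0$, which together with finite length forces every object to be a direct sum of copies of $s_x$, so $\scrC(X)\cong\bbF\text{-Vect}$.

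For the inductive step, fix a closed stratum $Z\subset X$ and let $U=X\setminus Z$; by induction $\scrC(U)\cong A''\text{-mod}$ for some finite-dimensional algebra $A''$. Since $Z$ is 2-connected and $\scrC|_Z$ is locally constant, $\scrC|_Z$ is constant, and applying axiom~(3) at any stalk shows that the objects supported on $Z$ form a copy of $\bbF\text{-Vect}$; taking global sections yields $\scrC_Z(X)\cong\bbF\text{-Vect}$. Theorem~\ref{th1holds} then supplies a global recollement
\[
\bbF\text{-Vect}\xrightarrow{i_*}\scrC(X)\xrightarrow{j^*}A''\text{-mod}.
\]
Diagram-chasing the recollement long exact sequences against the properties of the two ends shows that $\scrC(X)$ is a length category with finite-dimensional $\Hom$ and $\Ext^1$, and that its simple objects are $L_Z:=i_*\bbF$ together with the $j_{!*}L$ for simples $L$ of $\scrC(U)$, finite in number. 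The vanishing $\Ext^1(L_Z,L_Z)=0$ is automatic since any such self-extension is killed by $j^*$ and hence lies in $\scrC_Z(X)=\bbF\text{-Vect}$. For $\Ext^2(L_Z,L_Z)=0$ I would invoke the local-to-global spectral sequence $H^p(X;\underline{\Ext}^q(L_Z,L_Z))\Rightarrow\Ext^{p+q}_{\scrC(X)}(L_Z,L_Z)$ for the abelian stack $\scrC$, noting that $\underline{\Ext}^q(L_Z,L_Z)=0$ for $q=1,2$ by the stalk-level axiom~(3) and that $\underline{\Hom}(L_Z,L_Z)\cong i_*\underline{\bbF}_Z$, so $\Ext^n_{\scrC(X)}(L_Z,L_Z)\cong H^n(Z;\bbF)$ vanishes for $n=1,2$ by 2-connectedness of $Z$.

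With these properties of $\scrC(X)$ in place, Lemma~\ref{retractions} produces an exact retract $r:\scrC(X)\to\bbF\text{-Vect}$, and the proofs of Propositions~\ref{ff} and~\ref{prop-tilting} and Theorem~\ref{th2holds}---which invoke only the recollement formalism, the finiteness axioms, and the vanishings just established---transcribe verbatim to give an equivalence $\scrC(X)\cong C(F,G;T)$ for functors $F,G:A''\text{-mod}\to\bbF\text{-Vect}$. Right-exactness of $F=r\circ j_!$ and left-exactness of $G=r\circ j_*$ represent them as $F(-)=(-)\otimes_{A''}N$ and $G(-)=\Hom_{A''}(M,-)$ for a right, resp.\ left, $A''$-module $N=F(A'')$ and $M$; both are finite-dimensional because the functors land in $\bbF\text{-Vect}$. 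The natural transformation $T$ becomes an $A''$-bilinear pairing $M\otimes_\bbF N\to A''$, and $C(F,G;T)$ is then identified with the category of finite-dimensional modules over the one-point-extension algebra built from $A''$, $M$, $N$, and $T$, which is visibly finite-dimensional. The principal obstacle I anticipate is the $\Ext^2$ vanishing: it requires a local-to-global spectral sequence for $\Ext$ in an abelian stack, not explicitly developed in the paper, which would need to be extracted from the constructibility framework of \cite{epcs} or replaced by a direct \v{C}ech-theoretic argument on a regular-neighborhood cover of $Z$.
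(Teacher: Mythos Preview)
Your overall strategy matches the paper's---induction on strata, base case via constancy on a 2-connected space, inductive step via a global MacPherson--Vilonen description---but the inductive step diverges exactly where you flag trouble. You try to rerun the stalk-level chain Lemma~\ref{retractions} $\to$ Proposition~\ref{prop-tilting} $\to$ Theorem~\ref{th2holds} directly for the global category $\scrC(X)$, and Proposition~\ref{prop-tilting} then requires $\Ext^2_{\scrC(X)}(L_Z,L_Z)=0$ to produce the tilting object. You are right that no local-to-global spectral sequence for Yoneda $\Ext$ in an abstract abelian stack is available here, and manufacturing one is not routine: higher Yoneda classes in $\scrC(X)$ are not \emph{a priori} local data for $\scrC$. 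As written this is a genuine gap, not a detail.

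The paper avoids this computation entirely, and that is precisely why the regular-neighborhood hypothesis is in the statement (your argument never invokes it). Let $(U,L,\pi)$ be a regular neighborhood of the closed stratum $Z$. By Lemma~\ref{regnbdlem} together with 2-connectedness of $Z$, one has $\scrC(U)\cong i^*\scrC(Z)\cong\scrC_z$, so the recollement $\scrC_Z(U)\to\scrC(U)\to\scrC(U-Z)$ \emph{is} the stalk recollement $\bbF\text{-Vect}\to\scrC_z\to\scrC_z/s_z$, and Theorem~\ref{th2holds} applies to it as already proved, giving an equivalence $R_U:\scrC(U)\stackrel{\sim}{\to}C(F_U,G_U;T_U)$. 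The global retract is then $r_X=r_U\circ\mathit{res}_U$, and $R_X:\scrC(X)\to C(F_X,G_X;T_X)$ is shown to be an equivalence by comparing two weak-pullback squares over the cover $\{U,\,X-Z\}$: the stack condition for $\scrC$, and the analogous square for the MV categories (which is a pullback because $F_X,G_X$ factor through restriction to $U-Z$). No global $\Ext^2$ vanishing is ever needed; the only place $\Ext^2$ enters is axiom~(3) at the stalk, already consumed by Theorem~\ref{th2holds}. The paper then finishes by quoting \cite{mirollovilonen} for enough projectives in the resulting MV category, rather than your one-point-extension identification. What you sketch in your final sentence---using a regular-neighborhood cover of $Z$---is in spirit exactly the move the paper makes, but carried out via Lemma~\ref{regnbdlem} and the stack pullback rather than through an $\Ext$ spectral sequence.
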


\begin{proof}
We will induct on the number of strata.  Suppose first there is only one stratum.  Then the assumptions imply $\scrC$ is locally constant and $X$ is 2-connected.  By the main theorem of \cite{2monodromy} (see also theorem 5.7 of  \cite{epcs}) and the fact that $X$ is 2-connected, $\scrC$ is actually constant.  It follows that $\scrC(X)$ is equivalent to the stalk category $\scrC_x$ for some point $x \in X$.  By axiom (3) of definition \ref{defPstacks}, this category is equivalent to the category of $\bbF$-vector spaces, which is a category of modules over a finite-dimensional $\bbF$-algebra.

Now suppose that $(X,\scrS)$ has $n$ strata and that we have proved the theorem for all spaces with $\leq (n-1)$ strata.  Let $Z \subset X$ be a closed stratum of $X$, and let $(U,L,\pi)$ be a regular neighborhood of $Z$.  Since $Z$ is 2-connected and $i^* \scrC$ (resp. $\scrC_Z$) is locally constant on $Z$, we see that $i^* \scrC$ (resp. $\scrC_Z$) is constant and $i^*\scrC(Z) \cong \scrC_z$, (resp. $\scrC_Z(Z) \cong \scrC_{Z,z} \cong \bbF\text{-Vect}$) for a point $z \in Z$.  By lemma \ref{regnbdlem}, $i^*\scrC(Z) \cong \scrC(U)$.  Thus, the recollement triple
$$\scrC_Z(U) \stackrel{i_{U*}}{\to} \scrC(U) \stackrel{j_U^*}{\to} \scrC(U - Z)$$
is equivalent to the triple
$$\bbF\text{-Vect} \to \scrC_x \to \scrC_x/s_x$$
In particular, $i_{U*}$ has a retraction $r_U:\scrC(U) \to \scrC_Z(U)$ which induces an equivalence to a MacPherson-Vilonen construction, by theorem \ref{th2holds}.  The recollement triple
$$\scrC_Z(X) \to \scrC(X) \to \scrC(X-Z)$$ 
also has a retraction $r_X$ given by $\scrC(X) 
\stackrel{\mathit{res}}{\to} \scrC(U) 
\stackrel{r}{\to} \scrC_Z(U) = \scrC_Z(X) = \scrC(X)$, and we will show that $r_X$ induces an equivalence to a MacPherson-Vilonen construction as well.

Set $F_U = r_U (j_{U!})$, $G_U = r_U(j_{U*})$, let $T_U$ be the natural map $F_U \to G_U$, and let $R_U$ denote the functor $\scrC(U) \to C(F_U,G_U;T_U)$ induced by $r_U$.  Define $F_X, G_X, T_X$ and $R_X:\scrC(X) \to C(F_X,G_X;T_X)$ similarly.  By theorem \ref{th2holds},  $R_U$ is an equivalence.
The recollement triple
$$\scrC_Z(X) \to \scrC(X) \to \scrC(X-Z)$$ 
also has a retraction $r_X$ given by $\scrC(X) \stackrel{\mathit{res}}{\to} \scrC(U) \stackrel{r}{\to} \scrC_Z(U) = \scrC_Z(X) = \scrC(X)$.  The functor $r_X$ induces a functor $\scrC(X) \to C(F_X,G_X;T_X)$.  

Since $\scrC$ is a stack, we have a weak pullback square of categories:
$$\xymatrix{
 \scrC(X) \ar[r] \ar[d] & \scrC(X-Z) \ar[d] \\
  \scrC(U) \ar[r] & \scrC(U-Z)
}$$
Concretely, this means $\scrC(X)$ is equivalent to the 
category of triples $(c_1,c_2,\theta)$, where $c_1 \in \scrC(U)$, $c_2 \in \scrC(X-Z)$, and $\theta$ is an isomorphism between $c_1 \vert_{U-Z}$ and $c_2 \vert_{U-Z}$.  To show $R_X$ is an equivalence, it suffices to show that the square
$$\xymatrix{
C(F_X,G_X;T_X) \ar[r] \ar[d] \ar[r] \ar[d] & \scrC(X-Z) \ar[d] \\
C(F_U,G_U;T_U) \ar[r] & \scrC(U-Z)
}$$
is also a weak pullback.  That is, we must show that $C(F_X,G_X;T_X)$ is equivalent to the category of triples $(M,b_2,\theta)$, where 
\begin{enumerate}
\item[] $M = (F_U(b_1) \stackrel{m}{\to} V \stackrel{n}{\to} G_U(b_1))$ is an object of $C(F_U,G_U;T_U)$
\item[] $b_2$ is an object of $\scrC(X-Z)$
\item[] $\theta$ is an isomorphism  between $b_1$ and $b_2 \vert_{U-Z}$.
\end{enumerate}
But we have $F_X(b_2) = F_U(b_2\vert_{U-Z}) \stackrel{\theta}{\cong} F_U(b_1)$ and $G_X(b_2) = G_U(b_2\vert_{U-Z}) \stackrel{\theta}{\cong} G_U(b_1)$ by definition, so the assignment $(M,b_2,\theta) \mapsto (F_X(b_2) \to V \to G_X(b_2))$ is an equivalence of categories.

By the inductive hypothesis $\scrC(X)$ is equivalent to a category built from a category of finite-dimensional modules over a finite-dimensional $\bbF$-algebra $A$ via a $C(F,G;T)$-construction, with $F,G:A\text{-mod} \to \bbF\text{-Vect}$.  By construction every object of this category has finite length, and every hom set is a finite-dimensional $\bbF$-vector space.  To show that such a category is the category of finite-dimensional modules over a finite-dimensional $\bbF$-algebra it suffices to show that it has enough projectives, and this follows from proposition 2.5 of \cite{mirollovilonen}.  This completes the proof.
\end{proof}

\emph{Acknowledgements:}  I would like to thank Mark Goresky, Bob MacPherson, Andrew Snowden, Kari Vilonen, and Zhiwei Yun.  This paper has benefited from their comments.

\end{document}